\newcommand{\tmark}[1]{\tikz[remember picture, overlay] \node(#1){};}
\tikzset{every picture/.style={line width=0.75pt}}
\newtheorem{theorem}{Theorem}
\newtheorem{prop}[theorem]{Proposition}
\newtheorem{lemma}[theorem]{Lemma}
\theoremstyle{definition}
\newtheorem{remark}[theorem]{Remark}
\newcommand{\ts}{\hspace{0.5pt}}
\newcommand{\RR}{\mathbb{R}\ts}
\newcommand{\ZZ}{\mathbb{Z}}
\newcommand{\NN}{\mathbb{N}}
\newcommand{\QQ}{\mathbb{Q}}
\newcommand{\XX}{\mathbb{X}}
\newcommand{\cA}{\mathcal{A}}
\newcommand{\cL}{\mathcal{L}}
\newcommand{\cT}{\mathcal{T}}
\newcommand{\vL}{\varLambda}
\newcommand{\vrho}{\varrho}
\newcommand{\dd}{\,\mathrm{d}}
\newcommand{\oplam}{\mbox{\Large $\curlywedge$}}
\newcommand{\bs}{\boldsymbol}
\newcommand{\card}{\ts\mathrm{card}\ts}
\newcommand{\exend}{\hfill$\Diamond$}
\newcommand{\myfrac}[2]{\frac{\raisebox{-2pt}{$#1$}}
	{\raisebox{0.5pt}{$#2$}}}
\providecommand*{\bigcupdot}{%
  \mathop{%
    \vphantom{\bigcup}%
    \mathpalette\@bigcupdot{}%
  }%
}
\newcommand*{\@bigcupdot}[2]{%
  \ooalign{%
    $\m@th#1\bigcup$\cr
    \sbox0{$#1\bigcup$}%
    \dimen@=\ht0 %
    \advance\dimen@ by -\dp0
    \sbox0{\scalebox{\@dotscale{#1}}{$\m@th#1\cdot$}}%
    \advance\dimen@ by -\ht0
    \dimen@=.5\dimen@
    \hidewidth\raise\dimen@\box0\hidewidth
  }%
}
\newcommand{\@dotscale}[1]{%
  \ifx#1\displaystyle 1.7\else\ifx#1\textstyle 1.4\else
  \ifx#1\scriptstyle 1.2\else 1.1\fi\fi\fi
}
\renewcommand{\@captionfont}{\small}
\DeclareMathOperator{\dens}{dens}
\DeclareMathOperator{\vol}{vol}
\DeclareFontFamily{U}{mathx}{}
\DeclareFontShape{U}{mathx}{m}{n}{<-> mathx10}{}
\DeclareSymbolFont{mathx}{U}{mathx}{m}{n}
\DeclareMathAccent{\widecheck}{0}{mathx}{"71}
\DeclareMathAlphabet{\mathmybb}{U}{bbold}{m}{n}
\newcommand{\defeq}{\mathrel{\mathop:}=}
\begin{document}
	
\title[Exact renormalisation for patch frequencies in inflation systems]{Exact renormalisation for patch frequencies in inflation systems}
	
\author{Jan Maz\'{a}\v{c}}
\address{Fakult\"at f\"ur Mathematik, Universit\"at Bielefeld,
	\newline \indent Postfach 100131, 33501 Bielefeld, Germany}
\email{jmazac@math.uni-bielefeld.de}

\begin{abstract}
This note provides an explicit way of calculating the patch frequencies in geometric realisations of primitive substitutions using exact renormalisation relations. Further, we profit from these results to obtain the patch frequencies in the symbolic case as well as in other suspensions (under mild assumptions on the substitution). We illustrate this procedure on the Fibonacci example. 
\end{abstract}
	
\keywords{}
\subjclass{}

\maketitle

\section{Introduction}
Subshifts over finite alphabets $\cA$ and their corresponding dynamical systems have been studied extensively in the last decades. A particularly well-understood class arises from fixed points of substitution rules. It is a classic result that, in the case of primitive substitutions, the subshift (or the symbolic hull) is minimal and uniquely ergodic. For non-primitive substitutions, the situation becomes more complicated as the dynamical system is no longer minimal and more ergodic measures can exist (see \cite{CorSol} for a detailed study in this direction, and \cite{BKMS} for a more general discussion for Bratelli diagrams). 

While the existence of the \emph{patch frequency measure} for primitive systems is clear, the question of how to explicitly compute the relative frequency of a given patch (which need not be a word) remains unclear. For the symbolic setting, one can derive the induced substitution on $m$ letters, see \cite[Sec.~5.4]{Que} for details. 
Unfortunately, this approach provides frequencies for cylinder sets only, and cannot be used to determine whether a given patch is legal. 
For example, we cannot decide the legality of a pattern consisting of letter $a^{}_{1}$ at the zeroth position, $a^{}_{2}$ at $\ell$-th position and $a^{}_{3}$ at $m$-th position, and compute its relative frequency. Unless the underlying symbolic hull does not belong to a special class (see \cite{Bal12,Dek92,Frid,Pol} for explicit results), to the best of our knowledge, there is no general procedure that allows one to answer the question for a primitive aperiodic substitution rule. 

The aim of this note is to provide a method to compute the exact patch frequency of any given patch (not only a subword) and (thus) deciding its legality, provided we work with systems arising from primitive substitutions. 

The paper is organised as follows. We first recall the relevant notions from symbolic dynamics and the ways that allow one to pass from a symbolic to a geometric setting. 
We use a self-similar tiling (a set of its control points) from a transversal of the geometric hull and show how to compute the relative frequency of \emph{any} given patch. In particular, we derive exact renormalisation equations for the patch frequencies (Theorem~\ref{thm:patch_renorm_subst}) and show that they possess a unique solution (Theorem~\ref{thm:uniqueness}). 
Then, we illustrate how to pass from geometric to symbolic setting in the Fibonacci example, and we provide a way of deriving the patch frequency for the symbolic setting and for other suspensions under some mild assumptions on the roof function and the substitution (Theorems~\ref{thm:geom_to_symb} and \ref{thm:geom_to_arb}).

\section{Subshifts and their suspensions}
Consider a primitive aperiodic substitution $\varrho$ on a finite alphabet $\cA$ with $n$ letters and its fixed point~$w$, i.e., $w\in\cA^{\ZZ}$ with $w = \varrho^k(w)$ for some $k\in \NN$. The shift operator $S:\cA^{\ZZ} \rightarrow \cA^{\ZZ}$ acts via $(Sw)^{}_{i} \, =\, w^{}_{i+1}$. On $\cA^{\ZZ}$, we consider the product topology. This allows us to define the hull of $w$ as 
\[\XX(w) \, = \, \overline{\{S^i w \ \mid \ i\in \ZZ \} }, \]
a compact space, which is independent of the choice of the fixed point $w$, so we can write $\XX(w) = \XX(\varrho)$ and speak about a symbolic hull of a substitution $\varrho$. 
With the shift action $S$, it becomes a topological dynamical system $(\XX(\varrho), \, \ZZ)$, which is minimal \cite[Thm.~4.1]{BGr}, and strictly ergodic under the $\ZZ$-action, see \cite[Thm.~4.3]{BGr} for details. 
The unique ergodic measure $\mu$ on $(\XX(\varrho),\ \ZZ)$ is the patch frequency measure defined via cylinder sets.  Recall that, for a finite word $u=u^{}_0\dots u^{}_{\ell}$, $u^{}_{i}\in \cA$, the cylinder set $[u] \subset \XX(\vrho)$ is defined by
\[ [u] \, \defeq \, \{ x\in \XX(\vrho) \ \mid \ x^{}_{i} = u^{}_{i} \ \mbox{for } 0\,\leqslant \, i \, \leqslant \, \ell \}, \]
and for its measure, we have 
\[ \mu\bigl([u]\bigr) \, \defeq \, \mbox{relative frequency of } u \mbox{ in } w.\]
Due to the strict ergodicity of $\XX(\vrho)$, the frequency does not depend on the choice of the element from the hull. 

Now, consider a non-negative function $f:\cA \rightarrow \RR^{+}$. 
A standard construction turns the dynamical system $(\XX(\vrho),\, \ZZ )$ into a flow $(\XX(\vrho)^{(f)},\,\RR)$, usually called a suspension flow, or a~flow under a~function with base $(\XX(\vrho),\, \ZZ )$. Here, 
\[\XX(\vrho)^{(f)} \, = \, \bigl(\XX(\vrho) \times \RR \bigr) \Bigl/ \sim \, = \, \{(x,s) \ \mid x\in \XX(\vrho),\ 0 \leqslant s < f(x^{}_{0}) \}, \]
where we identify $(x, f(x^{}_{0})) \sim (S(x),0)$, with $x = \dots x^{}_{-1}\, x^{}_{0}\, x^{}_{1} \dots\ $. The $\RR$-action is given, for all $t\in\RR$, by $V^{}_{t}: (x,s) \mapsto (x,s+t) \mod \sim$, for any $x\in \XX(\vrho)$ and $s\in\RR$. 
Explicitly, the action reads 
\[ V^{}_{t}(x,s) \, = \, \begin{cases}
    (x, s+t), & \mbox{if } 0\leqslant s+t <f(x^{}_{0}), \\
    (S(x), s+t-f(x^{}_{0})), & \mbox{if } 0\leqslant s+t -f(x^{}_{0}) < f(S(x^{}_{0})) = f(x^{}_{1}),\\
    (S^2(x), s+t-f(x^{}_{0})-f(x^{}_{1})), & \mbox{if } 0\leqslant s+t -f(x^{}_{0}) -f(x^{}_{1})< f(x^{}_{2}),\\
    & \vdots
\end{cases}\]

More details about the suspension flows can be found for example in \cite[Ch.~11.1.]{CFS}, in \cite[Lemma~9.23]{EW}, or, in the context of substitutions, in \cite{BS,ST}. 
The probability measure $\mu$ extends to the suspension flow naturally, giving a probability measure $\mu^{}_{f}$, such that 
\[ \mu^{}_{f}(A,I\ts) \, \defeq \, \myfrac{ \mu(A) }{\int^{}_{\XX(\vrho)}f \dd \mu} \,\int^{}_{I} \dd s \]
holds for all $A\subset \XX(\vrho)$ and $I\subset \RR$. 

There is a natural interpretation of this dynamical system. Consider a tiling of $\RR$, i.e., a~collection $\mathcal{T}=\{T^{}_{i} \}^{}_{i\in\ZZ}$ of tiles $T^{}_{i}$ that cover $\RR$ and satisfy $\mathrm{int}(T^{}_{i}) \cap \mathrm{int}(T^{}_{j}) = \varnothing$ for $i\neq j$. 
The tiles $T^{}_{i}$ are translates of prototiles $I^{}_{a} = [0,\, f(a) )$, with $a\in \cA$. The tiles $T^{}_{i}$ are chosen so that $T^{}_{i}$ is a translate of $I^{}_{w^{}_{i}}$. 
In other words, we replaced the letters from $\cA$ in $w$ by shifted intervals, such that the left endpoint of the tile $T^{}_{0}$ coincides with the origin. 
This yields a dynamical system with an $\RR$-action $(\XX(\mathcal{T}),\, \RR)$, where the $\RR$-action is the translation $U^{}_{t}: \{T^{}_{i} \}^{}_{i\in\ZZ} \mapsto \{T^{}_{i}- t \}^{}_{i\in \ZZ}$, and $\XX(\mathcal{T})$ is the hull of a tiling $\mathcal{T}$ given by the orbit closure of $\mathcal{T}$ in the local topology, see \cite[Sec.~5.1]{BGr}. 
This dynamical system is essentially the same as the suspension flow $(\XX(\varrho)^{(f)},\,\RR)$, meaning that they are topologically conjugate \cite{CS1}. 

If we, instead of the entire tiling $\mathcal{T}$, consider the left endpoints of each interval (and eventually with colours to distinguish different prototile type), denoting the resulting point set by $\vL$, we obtain another dynamical system $(\XX(\vL), \, \RR)$, which is locally derivable (in the sense of \cite{BSJ91}) from $(\XX(\cT), \, \RR)$ (and vice versa). This system is uniquely ergodic too, and the ergodic measure $\nu$ is \emph{again} given by the patch frequencies. The measure is induced by the patch frequencies on the \emph{transversal}, i.e., on the set 
\[\XX^{}_{0}(\vL) \, = \, \{X\in \XX(\vL) \ \mid \ 0\in X \}. \]
The transversal is a topological space isomorphic to $\XX(\vrho)$, but there is, in general, no group action that would turn it into a dynamical system. Nevertheless, it provides us with important information as we shall see later.

Among all choices of the function $f$, there are two prominent ones. Either $f$ is a constant function, and we essentially recover the symbolic case, or we consider $(f(a))^{}_{a\in\cA}$ to be the left Perron--Frobenius (PF) eigenvector of the substitution matrix $M^{}_{\vrho}$ of substitution $\vrho$. 
Recall that $(M^{}_{\vrho})^{}_{ab}$ counts the number of $a$'s in $\vrho(b)$. If we choose the smallest entry of the eigenvector to be 1, we speak about \emph{natural tile lengths}.
In such a case, we obtain an additional feature of the underlying structure, namely, the inflation symmetry (or the self-similarity) which manifests itself in an essential way on the transversal $\XX^{}_{0}(\vL)$. 
We note that the hull corresponding to the self-similar tiling $\cT$ is usually called the \emph{geometric hull} of the substitution $\vrho$, see \cite[Sec.~5.4]{BGr}. We will later rely on the self-similar structure of the tiling and its left control points.

Recall that the last two above-mentioned dynamical systems (the two choices of the function~$f$) \emph{need not} be topologically conjugate although when considered as topological spaces, they are always homeomorphic \cite[Thm.~1.1]{CS1}. This is generally true for any two choices of functions $f$ and $g$, and Clark and Sadun also provide a sufficient condition of the substitution $\vrho$ (namely, $M^{}_{\vrho}$ has only one eigenvalue of magnitude one or greater (counted with multiplicities)) for $(\XX(\vrho)^{(f)},\,\RR)$ and $(\XX(\vrho)^{(g)},\,\RR)$ to be topologically conjugate \cite[Cor.~3.2]{CS1}. 
Another approach (which generalises to higher dimensions) uses tools from algebraic topology and states that two such suspensions are topologically conjugate if the corresponding tilings are related by an asymptotically negligible shape change (for details, consult \cite{CS2,Kel,JS}).

\section{geometric setting and the renormalisation equations}
Denote by $(\ell^{}_{1}, \, \dots, \ell^{}_{n})$ the entries of the left PF eigenvector of $M^{}_{\vrho}$ to the PF eigenvalue $\lambda$. The substitution rule $\vrho$ induces a geometric inflation rule on tiles $I^{}_{i} = [0, \ell^{}_{i})$ with 
\[ 
\lambda I^{}_{i} \, = \, \bigcupdot_{j=1}^n \bigl(I^{}_j + t^{}_{ji} \, \bigr), 
\]
where $t^{}_{ji} \subset \RR$ are finite sets, $I^{}_j + t^{}_{ji}$ is the Minkowski sum (in particular, $I^{}_j + \varnothing = \varnothing$) and the sets $t^{}_{ji}$ are uniquely determined by the symbolic substitution rule $\vrho$. In other words, the geometric inflation rule inflates the tile $I^{}_{i}$ by a factor $\lambda$ and subdivides according to the letters in the word $\vrho(a^{}_{i})$. This naturally extends to all translates of the prototiles.
Consider the control points $\vL^{}_{i}\subset \RR$ of all tiles of type $i$ (always take the left endpoint) and denote $\underline{\vL} = (\vL^{}_{1}, \, \dots, \vL^{}_{n})$. Since the original word $w$ was invariant (without loss of generality) under the action of $\vrho$, the control points of the geometric inflation tiling obey the following induced inflation symmetry 
\[\vL^{}_i \, = \, \bigcupdot_{j=1}^{n} \,\bigl(  \lambda \vL+ t^{}_{ij} \bigr) \, = \, \bigcupdot_{j=1}^{n} \, \bigcupdot_{t \in t^{}_{ij}} 
\bigl( \lambda \vL^{}_j + t \bigr), \]
which holds for $1\leqslant i\leqslant n$. The set-valued matrix $(t^{}_{ij})^{}_{1\leqslant i,j\leqslant n}$ is called the \emph{displacement matrix} and fully determines the substitution (whereas the substitution matrix $M^{}_{\varrho} = (\card (t^{}_{ij}))^{}_{1\leqslant i,j \leqslant n}$ does not). For further details on such inflation systems, see \cite[Sec.~4.8 and 5.4]{Sing}

Now, consider the relative frequency of a patch consisting of $m$ tiles $a^{}_{1}, \, \dots, \, a^{}_{m}$ within distances $x^{}_{1}, \, \dots, \, x^{}_{m-1}$ with $x^{}_{i}\in \RR$, (i.e., the distance between the left endpoints of $a^{}_{i}$ and $a^{}_{i+1}$ is $x^{}_{i}$), which will be denoted by $\nu^{}_{a^{}_{1} \, \cdots \, a^{}_{m}}(x^{}_{1}, \, \dots, \, x^{}_{m-1})$.  We call this function the \emph{patch frequency function}.
Since the point set $\vL = \bigcup^{n}_{i=1} \vL^{}_{i}$ arises from a primitive substitution, $(\XX(\vL), \, \RR)$ is strictly ergodic \cite{Sol97} and patch frequencies exists uniformly for each element of $\XX(\vL)$. 

Set $x^{}_{0}=0$. Then, we can write
\begin{equation}
\label{eq:patch_freq_def}
\begin{split}
    \nu^{}_{a^{}_{1}\, \cdots \, a^{}_{m}}(x^{}_{1}, \, \dots, \, x^{}_{m-1}) &\, = \, \lim_{R\to \infty} \myfrac{1}{\card\bigl(\vL^{(R)} \bigr)} \, \card \Bigl( {\displaystyle \bigcap_{i=1}^{m}} \bigl( \vL^{(R)}_{a^{}_{i}} - \sum_{j=0}^{i-1}x^{}_{j}\bigr) \Bigr)\\[2pt]
    &\, = \, \myfrac{1}{\dens(\vL )}\,\dens\Bigl( {\displaystyle \bigcap_{i=1}^{m}} \bigl( \vL_{a^{}_{i}} - \sum_{j=0}^{i-1}x^{}_{j}\bigr) \Bigr),
\end{split}
\end{equation}
where $A^{(R)} \, \defeq \, \{x\in A \ \mid \ |x| \leqslant R\}$, and $\dens(A)$ denotes the density of a discrete set $A$, which is, in this case, well defined.

By definition, $\nu^{}_{a^{}_{1}\, \cdots \, a^{}_{m}}(x^{}_{1}, \, \dots, \, x^{}_{m-1}) \geqslant 0$, for any $m\geqslant2$, any $a^{}_{i} \in \cA$ and any $x^{}_{i} \in \RR$. Clearly, if $x^{}_{j} \notin \vL^{}_{a^{}_{j+1}} \!\! - \vL^{}_{a^{}_j}$ for some $j$, then $\nu^{}_{a^{}_{1}\, \cdots \, a^{}_{m}}(x^{}_{1}, \, \dots, \, x^{}_{m-1}) =0$. We summarise some other properties of the patch frequency function, which directly follow from its definition. 
\begin{prop}
\label{prop:patch_freq_fun_properties}
Let $\XX(\vL)$ be the geometric hull defined by a fixed point $\underline{\vL}$ of a primitive, aperiodic, geometric inflation rule induced by a substitution on $\cA$. Then, for any $m\geqslant 2$ and for any $a^{}_{1}, \, \dots, \, a^{}_{m} \in \cA$, the patch frequency functions $\nu^{}_{a^{}_{1}\, \cdots \, a^{}_{m}}$ defined by \eqref{eq:patch_freq_def} exist uniformly on $\XX(\vL)$ and are the same for any $\vL' \in \XX(\vL)$. Moreover, they satisfy the following. 
\begin{enumerate}
    \item[(i)] For all $m\geqslant 2$ and for any $a^{}_{1}, \, \dots, \, a^{}_{m} \in \cA$, we have 
    \[ \nu^{}_{a^{}_{1}\, \cdots \, a^{}_{m}}(x^{}_{1}, \, \dots, \, x^{}_{m-1}) \, = \, \nu^{}_{a^{}_{m}\, \cdots \, a^{}_{1}}(-x^{}_{m-1}, \, \dots, \, -x^{}_{1}). \]
    \item[(ii)] If a patch $P$ occurs with zero frequency, the patch frequency of any patch $P'$ that contains $P$ vanishes as well. 
    \item[(iii)] For any $A\in\cA$, we have 
\begin{align*}
    \nu^{}_{a^{}_{1}\, \cdots\, a^{}_{i}\, A \, A \, a^{}_{i+3}\, \cdots \, a^{}_{m}}&(x^{}_{1}, \, \dots,\,x^{}_{i}, \, 0, \, x^{}_{i+2},\, \dots, \, x^{}_{m-1}) \\ & =\, \nu^{}_{a^{}_{1}\, \cdots\, a^{}_{i}\, A \, a^{}_{i+3}\, \cdots \, a^{}_{m}}(x^{}_{1}, \, \dots,\,x^{}_{i}, \, x^{}_{i+2},\, \dots, \, x^{}_{m-1}) .
\end{align*}
\end{enumerate}
\qed
\end{prop}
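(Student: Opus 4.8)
The plan is to reduce all three items to elementary manipulations of the density formula in \eqref{eq:patch_freq_def}; the uniform existence on $\XX(\vL)$ and the independence of the chosen element are already supplied by strict ergodicity \cite{Sol97}, so only the three displayed identities require argument. Throughout I write $s^{}_{i} \defeq \sum_{j=0}^{i-1} x^{}_{j}$ (so that $s^{}_{1}=0$ and $s^{}_{i}$ is the position of the left endpoint of $a^{}_{i}$ relative to $a^{}_{1}$), and I abbreviate the occurrence set by $O \defeq \bigcap_{i=1}^{m}\bigl(\vL^{}_{a^{}_{i}} - s^{}_{i}\bigr)$, so that $\nu^{}_{a^{}_{1}\cdots a^{}_{m}} = \dens(O)/\dens(\vL)$. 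The single analytic input is that $\dens$ is a genuine limit which is invariant under translation and monotone under inclusion; both are immediate once the limit in \eqref{eq:patch_freq_def} exists, which is precisely what strict ergodicity guarantees.

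For (i) I would simply reindex the reversed patch and check that its occurrence set is a global translate of $O$. Writing the reversed data as $b^{}_{k} = a^{}_{m+1-k}$ and $y^{}_{k} = -x^{}_{m-k}$, a direct computation of the corresponding partial sums $s'^{}_{k} = \sum_{j=0}^{k-1} y^{}_{j}$ shows that the factor indexed by $i = m+1-k$ equals $\vL^{}_{a^{}_{i}} + \sum_{j=i}^{m-1} x^{}_{j}$, which is exactly the $i$-th factor of $O$ shifted by the constant $\sum_{j=1}^{m-1} x^{}_{j} = s^{}_{m}$, independent of $i$. Hence the reversed occurrence set equals $O + s^{}_{m}$, and translation invariance of $\dens$ yields the claimed equality.

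Items (ii) and (iii) are even more direct. For (ii), the hypothesis that $P'$ contains $P$ means that the occurrence set of $P'$ arises from that of $P$ by intersecting with the further constraints coming from the extra tiles (after aligning base points by a translation); it is therefore a subset of a translate of the occurrence set of $P$, so monotonicity of $\dens$ together with $\nu \geqslant 0$ forces $\nu^{}_{P'} = 0$ as soon as $\nu^{}_{P} = 0$. For (iii), the two coincident copies of $A$ sit at the same position because the vanishing gap forces $s^{}_{i+2} = s^{}_{i+1}$; the two corresponding factors $\vL^{}_{A} - s^{}_{i+1}$ and $\vL^{}_{A} - s^{}_{i+2}$ are then identical, and by idempotence of intersection one of them may be dropped. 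A one-line check that the partial sums of the tiles $a^{}_{i+3}, \dots, a^{}_{m}$ are unaffected by deleting the $0$-gap then shows that the two occurrence sets coincide as sets, whence equal densities and equal frequencies.

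The only place demanding care is the bookkeeping of partial sums under reversal in (i); everything else is purely set-theoretic. I expect no genuine obstacle, the sole point worth stating explicitly being that translation invariance and monotonicity of $\dens$ are legitimate because, for the primitive inflation system at hand, the density in \eqref{eq:patch_freq_def} is an honest limit rather than merely an upper or lower density.
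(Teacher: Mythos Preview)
Your proposal is correct and matches the paper's approach: the paper gives no proof at all (the proposition ends with a bare \qed\ and is prefaced by ``which directly follow from its definition''), so your explicit unpacking via the density formula \eqref{eq:patch_freq_def}---translation invariance for (i), monotonicity under inclusion for (ii), idempotence of intersection for (iii)---is exactly the intended verification, just written out in full.
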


There are further symmetries, including those connecting frequencies of patches and their subpatches. For example, the relation between frequencies of three- and four-tile patches can be stated as follows
\[\nu^{}_{a^{}_{1}\, a^{}_{2}\, a^{}_{3}} (x^{}_{1}, \, x^{}_{2}) \, = \, \sum_{a\in\cA}\Bigl( \sum_{\substack{s,\, t \ts \geqslant \ts 0 \\ x^{}_{1} \ts = \ts  s+t}} \nu^{}_{a^{}_{1}\, a \, a^{}_{2}\, a^{}_{3}} (s,\, t, \, x^{}_{2}) + \sum_{\substack{s,\, t \ts \geqslant \ts 0 \\ x^{}_{2} \ts = \ts  s+t}} \nu^{}_{a^{}_{1} \, a^{}_{2}\,a \, a^{}_{3}} (x^{}_{1},\,s,\, t) \Bigr), \]
and states the fact that the frequency of a given three-tile patch is determined by the sum of all frequencies of four-tile patches, three of which are the original ones. 
This can be seen as an analogy to the Kolmogorov consistency equation when extending a~measure, see \cite{Jan} for details. 

All the above-mentioned symmetries did \emph{not} involve the self-similar structure of $\vL$ (to be more precise, of $\underline{\vL}$), but self-similarity manifests itself on the level of patch frequencies significantly. Since the frequencies exist and are \emph{limiting} quantities (and not asymptotic ones), we can \emph{exactly} relate the frequency of a patch in the system and in its inflated version. This was first observed for pair-correlations (two-tile frequencies) in \cite{BGM19}. We extend this concept and obtain the following result, which is an extension of \cite[Thm.~3.19]{BGM19} 

\begin{theorem}
\label{thm:patch_renorm_subst}
Let $\underline{\vL}$ be as in Proposition \emph{\ref{prop:patch_freq_fun_properties}}, let $\lambda$ be the inflation factor, and let $(t^{}_{ij})^{}_{1\leqslant i,j \leqslant n}$ be the displacement matrix of the inflation rule defining $\underline{\vL}$. Then, the patch frequency functions satisfy the exact renormalisation relations\index{renormalisation equations!for patch frequencies} 
\begin{multline}
\label{eq:renorm}
    \nu^{}_{a^{}_{1}\, \cdots \, a^{}_{m}}(x^{}_{1}, \, \dots, \, x^{}_{m-1}) \\ \, = \, \myfrac{1}{\lambda} \sum_{k,\ell = 1}^m \sum_{\alpha^{}_{k} =1}^{n} \, \sum^{}_{r^{(k)} \in t^{}_{a^{}_{\ell}\alpha^{}_{k}}}  \nu^{}_{\alpha^{}_{1}\, \cdots \, \alpha^{}_{m}} \left(\myfrac{x^{}_{1}{+}r^{(1)}{-}r^{(2)}}{\lambda},\, \dots, \, \myfrac{x^{}_{m}{+}r^{(m-1)}{-}r^{(m)}}{\lambda} \right).
\end{multline}
\end{theorem}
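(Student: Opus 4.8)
The plan is to prove \eqref{eq:renorm} directly at the level of densities, exploiting that the inflation symmetry $\vL^{}_i = \bigcupdot_{j=1}^n (\lambda \vL^{}_j + t^{}_{ij})$ is an \emph{exact} set identity for the fixed point $\underline{\vL}$, so that no limiting arguments beyond those already needed to define $\dens$ enter. Writing $X^{}_i \defeq \sum_{j=0}^{i-1} x^{}_j$ for the cumulative offsets (so $X^{}_1 = 0$), the second line of \eqref{eq:patch_freq_def} reduces the claim to an identity for
\[
D^{}_{a^{}_1 \cdots a^{}_m} \defeq \dens\Bigl( \bigcap_{i=1}^m \bigl(\vL^{}_{a^{}_i} - X^{}_i\bigr)\Bigr),
\]
since $\nu = D/\dens(\vL)$ and the normalising density $\dens(\vL)$ is the same on both sides of \eqref{eq:renorm}; all the densities occurring below exist by strict ergodicity.

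First I would substitute the inflation identity into each factor, using that $\vL^{}_{a^{}_i} = \bigcupdot_{\alpha=1}^n \bigcupdot_{r \in t^{}_{a^{}_i \alpha}} (\lambda \vL^{}_\alpha + r)$ is a \emph{disjoint} union, so that every $q \in \vL^{}_{a^{}_i}$ has a unique address $(\alpha, r)$ with $q = \lambda q' + r$ and $q' \in \vL^{}_\alpha$. Applying this to $p \in \vL^{}_{a^{}_1}$ and to each shifted point $p + X^{}_i \in \vL^{}_{a^{}_i}$, I would set $\tilde p \defeq (p - r^{(1)})/\lambda$, where $(\alpha^{}_1, r^{(1)})$ is the address of $p$. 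A short computation then shows that the address $(\alpha^{}_i, r^{(i)})$ of $p + X^{}_i$ forces $\tilde p + Y^{}_i \in \vL^{}_{\alpha^{}_i}$, where
\[
Y^{}_i \defeq \myfrac{X^{}_i + r^{(1)} - r^{(i)}}{\lambda},
\]
so that $\tilde p \in \bigcap_{i=1}^m (\vL^{}_{\alpha^{}_i} - Y^{}_i)$. Conversely, fixing the data $(\alpha^{}_i, r^{(i)})^m_{i=1}$ with $r^{(i)} \in t^{}_{a^{}_i \alpha^{}_i}$ and taking any $\tilde p$ in this deflated intersection, the point $p = \lambda \tilde p + r^{(1)}$ lies in the original intersection with exactly these addresses. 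This yields a bijection, hence a genuine disjoint decomposition
\[
\bigcap_{i=1}^m (\vL^{}_{a^{}_i} - X^{}_i) = \bigsqcup_{(\alpha^{}_i),\,(r^{(i)})} \Bigl( \lambda \bigl(\textstyle\bigcap_{i=1}^m (\vL^{}_{\alpha^{}_i} - Y^{}_i)\bigr) + r^{(1)} \Bigr),
\]
the union running over all $(\alpha^{}_1, \dots, \alpha^{}_m) \in \{1, \dots, n\}^m$ and all offsets $r^{(i)} \in t^{}_{a^{}_i \alpha^{}_i}$, which is the index set encoded in the multiple sum of \eqref{eq:renorm}.

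The remaining step is to read off the new arguments and pass to densities. The consecutive gaps of the deflated cumulative positions are
\[
Y^{}_{i+1} - Y^{}_i = \myfrac{x^{}_i + r^{(i)} - r^{(i+1)}}{\lambda}, \qquad 1 \leqslant i \leqslant m-1,
\]
which are exactly the arguments of $\nu^{}_{\alpha^{}_1 \cdots \alpha^{}_m}$ on the right-hand side of \eqref{eq:renorm}. Since $\dens$ is translation invariant and obeys the scaling relation $\dens(\lambda A) = \lambda^{-1}\dens(A)$ on the line, each piece of the decomposition contributes $\lambda^{-1}\dens(\bigcap_i (\vL^{}_{\alpha^{}_i} - Y^{}_i))$; summing over the finite index set, using finite additivity of $\dens$ on disjoint sets, and dividing by $\dens(\vL)$ gives \eqref{eq:renorm}. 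The main point requiring care is the decomposition itself: one must check that the addresses of all $m$ shifted points are governed by the \emph{single} deflated point $\tilde p$ (so that the pieces are indexed precisely by $((\alpha^{}_i),(r^{(i)}))$ and are pairwise disjoint by uniqueness of the inflation address), after which the density bookkeeping and the emergence of the factor $\tfrac{1}{\lambda}$ are routine. The case $m = 2$ recovers \cite[Thm.~3.19]{BGM19}.
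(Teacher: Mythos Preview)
Your argument is correct and is essentially the same strategy as the paper's, just phrased more algebraically: the paper invokes local recognisability to assert that each tile $a_i$ sits in a unique level-$1$ supertile $\alpha_i$ at a unique relative offset $r^{(i)}$, then sums over these choices and picks up the factor $\tfrac{1}{\lambda}$ from rescaling the averaging ball from $R$ to $R/\lambda$; you encode the same uniqueness as the disjointness of $\vL_{a_i} = \bigsqcup_{\alpha,r}(\lambda\vL_\alpha + r)$, expand the intersection $\bigcap_i(\vL_{a_i}-X_i)$ accordingly, and pick up $\tfrac{1}{\lambda}$ from the scaling law for $\dens$. The only stylistic difference is that you work directly with the density expression in the second line of \eqref{eq:patch_freq_def} rather than with finite counts in balls, which lets you avoid the paper's passage through ``another element $\vL'\in\XX(\vL)$''; this makes your bookkeeping a bit cleaner, but the combinatorial core (unique inflation address for each of the $m$ shifted points, governed by a single deflated basepoint $\tilde p$) is identical.
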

\begin{proof}
Based on the definition of patch frequencies (Eq.~\eqref{eq:patch_freq_def}), we start with a radius $R>0$ and some $z\in \RR$ such that $B^{}_{R}(z)$ contains a patch of $m$ tiles $a^{}_{1}, \, \dots, \, a^{}_{m}$ within distances $x^{}_{1}, \, \dots, \, x^{}_{m-1}$. Since the hull $\XX(\vL)$ is aperiodic, we can profit from local recognisability\index{recognisability}, so every tile belongs to a unique level-1 supertile, say an inflated version of tile~$\alpha^{}_{k}$. At the level of control points, this means that there exists an element $\vL' \in \XX(\vL)$ and a ball of radius $\tfrac{R}{\lambda}$ such that there is a unique set of control points\index{set!of all control points} $\alpha^{}_{1}, \, \dots, \, \alpha^{}_{m-1}$ with 
\[| \alpha^{}_{i+1} - \alpha^{}_{i}| \, = \, \myfrac{x^{}_{i}+r^{(i)}-r^{(i+1)}}{\lambda}, \]
where $r^{(i)}$ and $r^{(i+1)}$ are elements of the displacement matrix $T$ and encode the exact positions of points $a^{}_{i}$ and $a^{}_{i+1}$, respectively, in the corresponding level-1 supertiles\index{supertile}. We illustrate this decomposition in Figure \ref{fig:renormalisation}. 
\begin{figure}[h]
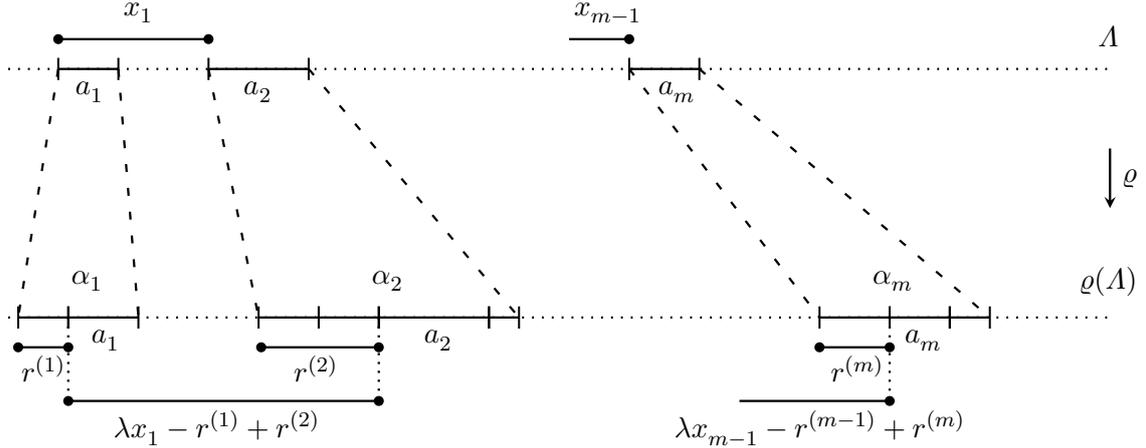

\centering
\include{renormalisation}
\vspace*{-14mm}
\caption[Renormalisation scheme for deriving the renormalisation equations]{Illustration of the renormalisation scheme. Every tile $a^{}_{i}$ in a patch in $\vL$ belongs to a unique level-1 supertile $\alpha^{}_{i}$, and within this supertile, it has a relative distance~$r^{(i)}$. This allows one to relate the patch frequencies at the level of supertiles, i.e., in $\varrho(\vL)$, with the patch frequencies in the original point set $\vL$.}
\label{fig:renormalisation}
\end{figure}
Now, summing over all possible supertiles that contain $a^{}_{1}, \, \dots, \, a^{}_{m}$ and all possible displacements within each supertile gives the claim together with the renormalising constant coming from changing the averaging radius in Eq.~\eqref{eq:patch_freq_def} from $R$ to $\tfrac{R}{\lambda}$. As the functions  $ \nu^{}_{a^{}_{1}\, \cdots \, a^{}_{m}}(x^{}_{1}, \, \dots, \, x^{}_{m-1})$ are independent of $\vL' \in \XX(\vL)$, we can relate the original and the rescaled version, though they may have come from different elements of the hull. 
\end{proof}

Observe that the arguments $\frac{x^{}_{i}+r^{(i)}-r^{(i+1)}}{\lambda}$ on the right hand side of Eq. \eqref{eq:renorm} are in general almost always smaller than $x^{}_{i}$ as $\lambda >1$. If this is not the case, it follows from the inequality $\frac{x^{}_{i}+r^{(i)}-r^{(i+1)}}{\lambda} \geqslant x^{}_{i}$ that the subset of all equations \eqref{eq:patch_freq_def} with all $x^{}_{i}$ satisfying 
\begin{equation}
    \label{eq:constraint}
    |x^{}_{i}| \, \leqslant \, \myfrac{1}{\lambda-1} \ \max^{}_{1\leqslant i \leqslant m-1} \max^{}_{1\leqslant j,\ell,p,s\leqslant n} \bigl\{ r^{(i)}-r^{(i+1)} \, \mid \, r^{(i)} \in t^{}_{s\ell}, \, r^{(i+1)} \in t^{}_{pj} \bigr\}
\end{equation}
forms a closed subsystem, which is finite. Indeed; we only consider $x^{}_{i} \in \vL^{}_{a^{}_{i+1}} - \vL^{}_{a^{}_{i}}$. 
Since the~$\vL^{}_{a}$ are all of finite local complexity, their difference is a locally finite set, so for all $i$, we have only finitely many $x^{}_{i}$ satisfying \eqref{eq:constraint}. Since $\cA$ is finite and $m<\infty$, we conclude the finiteness of the subsystem. We call it the \emph{self-consistent part}. The remaining equations form the \emph{recursive part}, the solution of which is uniquely determined by the self-consistent part. Indeed, if any coordinate exceeds the bound \eqref{eq:constraint}, applying \eqref{eq:renorm} repeatedly (together with the restriction $x^{}_{i} \in \vL^{}_{a^{}_{i+1}} \! \!- \vL^{}_{a^{}_{i}}$) reduces its modulus, until all coordinates in the expressions on the RHS of \eqref{eq:renorm} belong to the self-consistent part.

It remains to prove that the self-consistent part possesses a unique positive solution. We adapt  the approach from \cite{BGM19} using the normal form of a non-negative matrix (in the sense of Gantmacher \cite[Sec.~13.4]{Gant}), and we apply Lemma~3.18 from \cite{BGM19}, which we repeat here for convenience. 

\begin{lemma}[{\cite[Lemma~3.18]{BGM19}}]
\label{lem:normal-form}
Consider a non-negative matrix\/ $M$ in normal form\/
$M_{\mathsf{nf}}$, i.e. there exists a permutation similarity transformation bringing $M$ to the form 
\begin{equation}\label{eq:normal-form}
\renewcommand{\arraystretch}{1.2}
   M_{\mathsf{nf}} \, = \,
   \left(\begin{array}{ccc@{\;}c@{\;}c@{\;}c@{\;\,}c}
   M_{1} &   & \bs{0} &\vline&  && \\
     &  \ddots && \vline && \bs{0} & \\
   \bs{0} &  & M_{r} & \vline & & & \\
   \hline
   M_{r+1,1} & \cdots &  M_{r+1,r} && M_{r+1} && \bs{0} \\
    \vdots &  & && \!\!\! \ddots  & \ddots & \\
   M_{s,1} & &  \cdots && & M_{s,s-1} & M_{s}
   \end{array}\right)
\renewcommand{\arraystretch}{1}
\end{equation}
where $s\geqslant r \geqslant 1$ and all $M_{i}$ are
indecomposable, non-negative square matrices, and where, if $s>r$, each
sequence $M_{r+\ell,1}, \dots, M_{r+\ell,r+\ell-1}$ contains at least
one non-zero matrix.  
Further, let \/ $\lambda$ be the PF eigenvalue of\/ $M$. Then, $M$ has a corresponding
strictly positive eigenvector, meaning that all entries are positive, if and only if
\begin{enumerate}\itemsep2pt
\item $M_{i}$ has eigenvalue\/ $\lambda$ for every\/ $1\leqslant
  i\leqslant r$;
\item No\/ $M_{j}$ with\/ $r < j \leqslant s$ has\/ $\lambda$
  as an eigenvalue.
\end{enumerate}
In this situation, one has\/ $\lambda > 0$, and the eigenspace 
of\/ $\lambda$ is one-dimensional if and only if\/ $r=1$.  
\end{lemma}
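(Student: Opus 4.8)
\textit{Approach.} The plan is to exploit the block lower-triangular shape of $M_{\mathsf{nf}}$ and reduce everything to Perron--Frobenius theory applied to the indecomposable diagonal blocks $M_{1}, \dots, M_{s}$. Since a permutation similarity only relabels coordinates and preserves non-negativity and strict positivity of vectors, I may work directly with $M = M_{\mathsf{nf}}$. Because $M_{\mathsf{nf}}$ is block triangular, its spectrum is the union of the spectra of the $M_{i}$, so the PF eigenvalue satisfies $\lambda = \max_{i} \rho(M_{i})$ and in particular $\lambda \geqslant \rho(M_{j})$ for every block. I write an eigenvector as $x = (x_{1}, \dots, x_{s})$ split according to the block partition; the $i$-th block-row of $M_{\mathsf{nf}} x = \lambda x$ then reads $M_{i} x_{i} = \lambda x_{i}$ for $1 \leqslant i \leqslant r$ (there being no off-diagonal blocks in those rows) and $(\lambda I - M_{i}) x_{i} = \sum_{j < i} M_{ij} x_{j}$ for $r < i \leqslant s$. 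Indecomposability of each $M_{i}$ lets me invoke the standard facts: the eigenspace of $M_{i}$ for its PF eigenvalue $\rho(M_{i})$ is one-dimensional and spanned by strictly positive right and left eigenvectors.

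\textit{The ``if'' direction.} I would construct a positive eigenvector recursively. Assuming (1) and (2), set $x_{i}$ equal to the positive PF eigenvector of $M_{i}$ for $1 \leqslant i \leqslant r$; by (1) each solves $M_{i} x_{i} = \lambda x_{i}$. For $r < i \leqslant s$, condition (2) forces $\rho(M_{i}) < \lambda$ (its PF eigenvalue is $\leqslant \lambda$ but cannot equal $\lambda$), so $\lambda I - M_{i}$ is invertible with non-negative resolvent $(\lambda I - M_{i})^{-1} = \sum_{k \geqslant 0} \lambda^{-(k+1)} M_{i}^{k}$. Defining $x_{i} = (\lambda I - M_{i})^{-1} \sum_{j<i} M_{ij} x_{j}$ solves the $i$-th block-row, and by induction all earlier $x_{j}$ are strictly positive. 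The structural hypothesis that block-row $i$ contains a non-zero $M_{ij}$ guarantees the right-hand side is a non-zero non-negative vector, and indecomposability of $M_{i}$ upgrades $x_{i}$ to strictly positive: for each row index $p$ there is a power $k$ with $(M_{i}^{k})_{pq}>0$, where $q$ indexes a positive entry of the right-hand side, so that coordinate of $x_i$ is positive. This yields a strictly positive eigenvector.

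\textit{The ``only if'' direction.} Suppose $M_{\mathsf{nf}} x = \lambda x$ with $x > 0$. For $1 \leqslant i \leqslant r$, the relation $M_{i} x_{i} = \lambda x_{i}$ with $x_{i} > 0$ forces $\lambda = \rho(M_{i})$ by Perron--Frobenius, giving (1). For (2) I use the left eigenvector: if some $M_{j}$ with $r < j \leqslant s$ had $\lambda$ as an eigenvalue, then necessarily $\rho(M_{j}) = \lambda$; letting $v_{j} > 0$ be the left PF eigenvector of $M_{j}$ and pairing it with $(\lambda I - M_{j}) x_{j} = \sum_{k<j} M_{jk} x_{k}$ annihilates the left-hand side and yields $v_{j}^{\mathsf{T}} \sum_{k<j} M_{jk} x_{k} = 0$. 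Since $v_{j} > 0$ and the sum is non-negative, the sum must vanish, contradicting the structural hypothesis (a non-zero $M_{jk}$) together with $x_{k} > 0$. Hence no such block carries the eigenvalue $\lambda$, which is (2).

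\textit{The two addenda, and the main obstacle.} If $\lambda = 0$, then $M_{\mathsf{nf}} x = 0$ with $x > 0$ and $M_{\mathsf{nf}} \geqslant 0$ forces every row of $M_{\mathsf{nf}}$ to vanish, i.e.\ $M_{\mathsf{nf}} = 0$; excluding this trivial case gives $\lambda > 0$. For the dimension count I solve $(\lambda I - M_{\mathsf{nf}}) x = 0$ block by block: each of the first $r$ rows leaves a one-parameter choice $x_{i} = c_{i} u_{i}$ in the one-dimensional PF eigenspace of $M_{i}$, while for $i > r$ the invertibility of $\lambda I - M_{i}$ determines $x_{i}$ uniquely from the earlier blocks. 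The eigenspace is therefore linearly isomorphic to the parameter space of $(c_{1}, \dots, c_{r})$ over $\CC$, hence has dimension $r$, and is one-dimensional exactly when $r = 1$. I expect the genuine obstacle to be the two positivity upgrades, namely passing from the non-negative resolvent image to a strictly positive vector in the ``if'' direction and the left-eigenvector pairing in the ``only if'' direction; both hinge essentially on indecomposability of the blocks and on the structural assumption that every recursive block-row carries a non-zero off-diagonal block.
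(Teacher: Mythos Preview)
The paper does not prove this lemma at all; it is quoted verbatim from \cite{BGM19} (``which we repeat here for convenience'') and used as a black box in the proof of Theorem~\ref{thm:uniqueness}. There is therefore no paper proof to compare against. Your argument is correct and is essentially the standard one (cf.\ Gantmacher \cite[Sec.~13.4]{Gant}): block-triangularity reduces the spectral problem to the diagonal blocks, Perron--Frobenius for irreducible blocks handles the first $r$ rows, and the resolvent $(\lambda I - M_i)^{-1}=\sum_{k\geqslant 0}\lambda^{-(k+1)}M_i^{k}$ together with irreducibility of $M_i$ produces strict positivity in the recursive rows. The left-eigenvector pairing for the ``only if'' part of condition~(2) is also the standard device. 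Your identification of the trivial case $M=0$ as the sole obstruction to $\lambda>0$ is the right reading of the addendum.
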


\begin{theorem}\
\label{thm:uniqueness}
    The self-consistent part of Eq. \eqref{eq:renorm} possesses a unique positive solution. 
\end{theorem}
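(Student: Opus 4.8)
The plan is to recast the self-consistent part as a finite eigenvalue problem for a non-negative matrix and then invoke Lemma~\ref{lem:normal-form}. First I would enumerate the finitely many \emph{admissible configurations}, i.e.\ pairs consisting of a patch type $a^{}_{1}\cdots a^{}_{m}$ together with a distance vector $(x^{}_{1},\dots,x^{}_{m-1})$ with each $x^{}_{i}\in\vL^{}_{a^{}_{i+1}}-\vL^{}_{a^{}_{i}}$ obeying the bound \eqref{eq:constraint}. The elementary estimate $\bigl|\tfrac{x^{}_{i}+r^{(i)}-r^{(i+1)}}{\lambda}\bigr|\leqslant\tfrac{|x^{}_{i}|+C}{\lambda}$ (with $C$ the maximal displacement difference) shows that this collection is closed under \eqref{eq:renorm}, which is exactly the statement that the self-consistent part is a closed subsystem. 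Collecting the corresponding values into a vector $\bs{v}$, the self-consistent part becomes $\lambda\ts\bs{v}=B\ts\bs{v}$ for a finite non-negative matrix $B$ whose entries count the admissible displacements $r^{(k)}\in t^{}_{a^{}_{\ell}\alpha^{}_{k}}$. Since the renormalisation equation for a legal configuration references only legal configurations on its right-hand side (a legal patch de-substitutes to a legal super-patch), I would restrict $B$ to the legal configurations; on these, the patch frequencies of Proposition~\ref{prop:patch_freq_fun_properties} provide a \emph{strictly positive} solution, so existence is automatic and the whole content of the statement is uniqueness.

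Because the restricted $B$ is non-negative and admits a strictly positive eigenvector for the positive eigenvalue $\lambda$, standard Perron--Frobenius theory forces $\lambda$ to be the spectral radius, i.e.\ the PF eigenvalue of $B$. I would then bring $B$ to its Gantmacher normal form \eqref{eq:normal-form} and apply Lemma~\ref{lem:normal-form}: the very existence of the strictly positive $\lambda$-eigenvector already guarantees that conditions (i) and (ii) of the lemma hold. Consequently the only thing left to prove is that the $\lambda$-eigenspace is one-dimensional, equivalently that $r=1$, which by the lemma is equivalent to $B$ having a \emph{single} indecomposable class $M^{}_{1}$ whose renormalisation equations involve no other class.

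To pin down this unique class I would isolate the \emph{diagonal configurations} $a\cdots a$ with all distances equal to zero. By part~(iii) of Proposition~\ref{prop:patch_freq_fun_properties} their frequency equals the single-tile frequency of $a$, and specialising \eqref{eq:renorm} to them (all displacements coincide, so every argument stays $0$) yields precisely $\lambda\ts\nu^{}_{a}=\sum_{\alpha}(M^{}_{\vrho})^{}_{a\alpha}\ts\nu^{}_{\alpha}$. Thus the diagonal configurations form an invariant subsystem whose matrix is exactly the substitution matrix $M^{}_{\vrho}$, which is primitive with simple PF eigenvalue $\lambda$, and their equations reference only diagonal configurations, so this block is one of the $M^{}_{1},\dots,M^{}_{r}$. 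The remaining (genuinely spread-out) configurations have coordinates that are strictly contracted by the factor $\lambda$ under \eqref{eq:renorm}, so their renormalisation equations always couple them to configurations in strictly smaller-coordinate classes; hence no such configuration can form an isolated class of its own, and by condition (ii) these transient blocks carry spectral radius strictly below $\lambda$. Using the primitivity of $\vrho$ (any two legal patches co-occur and are therefore linked through the renormalisation graph to the diagonal core), I would conclude that the diagonal block is the \emph{only} class with eigenvalue $\lambda$ that involves no other class, so $r=1$ and the eigenspace is one-dimensional.

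One-dimensionality of the $\lambda$-eigenspace determines $\bs{v}$ up to a positive scalar, and fixing the scale by the normalisation built into \eqref{eq:patch_freq_def} (total density, equivalently the Kolmogorov-type consistency with the single-tile frequencies) produces the asserted unique positive solution. The main obstacle is exactly the verification that $r=1$: a priori the normal form could contain several indecomposable classes each attaining $\lambda$, and ruling this out requires the honest argument that every non-diagonal legal configuration is genuinely coupled outward — so that the only recurrent class carrying the top eigenvalue is the primitive $M^{}_{\vrho}$-core — and that all transient classes have spectral radius strictly less than $\lambda$.
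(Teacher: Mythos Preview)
Your overall strategy is more direct than the paper's and genuinely different. The paper proceeds by induction on $m$: it exhibits an explicit nested block decomposition of $M^{(m)}$ in terms of the lower-order matrices $M^{(\ell)}$ (via auxiliary blocks $\widetilde{M}^{(\ell)}$ of the shape \eqref{eq:Mtilde}), takes the base case $m=2$ from \cite{BGM19}, and propagates one-dimensionality of the $\lambda$-eigenspace up this hierarchy by checking that none of the residual blocks $B^{(\ell)}$ acquires $\lambda$ as an eigenvalue. You instead restrict to legal configurations, use strict positivity of the patch-frequency vector to obtain conditions (i) and (ii) of Lemma~\ref{lem:normal-form} for free, and then argue $r=1$ directly. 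Both routes terminate with the same key step --- the primitivity coupling: any legal patch is a subpatch of some $\vrho^{j}(a)$, so after $j$ iterations its renormalisation equation references the diagonal value $\nu^{}_{a\cdots a}(0,\dots,0)$, which is precisely the content of the paper's final paragraph. Your route buys brevity; the paper's buys an explicit description of the normal form of $M^{(m)}$.

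Two of your intermediate claims, however, are false and should be removed. First, the contraction claim: inside the self-consistent region the map $x^{}_{i}\mapsto (x^{}_{i}+r^{(i)}-r^{(i+1)})/\lambda$ does \emph{not} in general decrease $|x^{}_{i}|$; the bound \eqref{eq:constraint} is by construction the region where contraction can fail, so you cannot rule out closed cycles of non-diagonal configurations by ``shrinking coordinates''. This step is fortunately redundant, since the primitivity coupling alone already forces every non-diagonal legal class to have an outgoing edge into the $M^{}_{\vrho}$-block and hence $r=1$. Second, your parenthetical ``a legal patch de-substitutes to a legal super-patch'' justifies only that each \emph{occurrence} of a legal patch lies in a legal super-patch; the sum in \eqref{eq:renorm} runs over \emph{all} combinatorially admissible super-patches, some of which may be illegal (for Fibonacci, the equation for $\nu^{}_{aa}(\tau)$ references $\nu^{}_{bb}(1)$). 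What is true, and what actually makes your restriction legitimate, is the converse: an \emph{illegal} configuration references only illegal ones (otherwise inflating a legal preimage would realise it), so the legal block is a quotient and the patch-frequency vector restricts to a strictly positive eigenvector there. You should state it that way round.
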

\begin{proof}
Consider now the vector $\bs{\nu}^{(m)}$ with entries $\nu^{}_{a^{}_{1}\, \cdots \, a^{}_{m}}(x^{}_{1}, \, \dots, \, x^{}_{m-1})$ with all $a^{}_{i}\in \cA$, $x^{}_{i} \in \vL^{}_{a^{}_{i+1}} \! \!- \vL^{}_{a^{}_{i}}$ and $x^{}_{i}$ satisfying \eqref{eq:constraint}. Then, \eqref{eq:renorm} can be seen as an eigenvalue problem for a~non-negative matrix $M^{(m)}$, in other words, the self-consistent part of  \eqref{eq:renorm} can be understood as 
\[\bs{\nu}^{(m)} \, = \, \myfrac{1}{\lambda} M^{(m)} \bs{\nu}^{(m)}. \]
The PF theorem, together with the strict ergodicity of the hull, implies that there exists a positive solution. To show uniqueness, we proceed as follows.

Observe first that $\nu^{}_{a\, \cdots \, a}(0, \, \dots, \,0) \, = \, \nu^{}_a$ is the letter frequency of $a\in\cA$. Part (iii) of Proposition \ref{prop:patch_freq_fun_properties} indicates that $M^{(m)}$ and $M^{(\ell)}$ with $\ell <m$ are related. In particular, one identifies the pair correlation functions $\nu^{}_{ij}(x)$ among the patch frequencies in $m-1$ different ways. 

By a suitable permutation of entries in $\bs{\nu}^{(m)}$, we can reach the following form of the matrix $M^{(m)}$, and we will show that this is its normal form.
\begin{align}
\label{eq:M}
M^{(m)} = \left(
        \begin{array}{ccccc}
        M^{(1)}\tmark{1} &  &  &  &\\[3pt]
        & \widetilde{M}^{(2)}\tmark{b} &  & &\\[3pt]    
        & & \ddots &&\\
        & & & \widetilde{M}^{(m-1)}\tmark{c} & \\[3pt]
        & & & & \ddots 
        \end{array}
    \right),
        \begin{tikzpicture}[remember picture,overlay]
            \draw ($(1)+(0.05,0.35)$) to ($(1)+(0.05,-0.2)$) to ($(1)-(0.97,0.2)$);
            \draw ($(b)+(0.05,1.01)$) to ($(b)+(0.05,-0.2)$) to ($(b)-(2.14,0.2)$);
            \draw ($(c)+(0.05,2.3)$) to ($(c)+(0.05,-0.2)$) to ($(c)-(4.59,0.2)$);
        \end{tikzpicture}
\end{align}
where $M^{}_{(1)} = M^{}_{\vrho}$ is the substitution matrix. Matrices $\widetilde{M}^{(\ell)}$ can be derived from the normal form of $M^{(\ell)}$ as follows. If $M^{(\ell)}$ has normal form $\left(\begin{smallmatrix}
    M^{\ell}_u & \bs{0} \\A^{(\ell)} & B^{(\ell)}
\end{smallmatrix} \right)$, then 
\begin{equation}
\label{eq:Mtilde}
    \widetilde{M}^{(\ell)} \, = \, \begin{pmatrix}
        M^{(\ell)}_u &&&&\\
        A^{(\ell)} & B^{(\ell)} && \bs{0}&\\ 
        A^{(\ell)} & \bs{0} & B^{(\ell)} &&\\
        \vdots &\vdots &\ddots& \ddots & \\
        A^{(\ell)} & \bs{0}&\cdots& \bs{0}& B^{(\ell)}
    \end{pmatrix}\begin{array}{c}
    \\
    \left.\hspace*{-20pt}\begin{tabular}{c}
    \, \\
    \, \\
    \, \\
    \,
    \end{tabular}\right\} \binom{m-1}{\ell-1}\, \mbox{times}
    \end{array}.
\end{equation}

Now, $M^{(1)} = M^{}_{\vrho}$ and $\bs{\nu}^{(1)} = \bs{\nu}^{}_{\vrho}$ is the right PF eigenvector with letter frequencies. Baake, Gähler and Ma\~{n}ibo showed that the assertion holds for $m=2$ \cite[Thm.~3.19]{BGM19}. In particular, they derived that $M^{(2)} = \left(\begin{smallmatrix}
    M^{}_{\vrho} & \bs{0} \\A^{(2)} & B^{(2)}
\end{smallmatrix} \right)$ is the normal form, and, by applying Lemma \ref{lem:normal-form}, they concluded that $\bs{\nu}^{(2)}$ is the only eigenvector corresponding to $\lambda$. 

We will proceed inductively. Suppose, for all $\ell < m$, $M^{(\ell)}$ has normal form $ \left(\begin{smallmatrix}
    M^{}_{\vrho} & \bs{0} \\A^{(\ell)} & B^{(\ell)}
\end{smallmatrix} \right)$, so the eigenspace corresponding to $\lambda$ is one-dimensional, determined by $\bs{\nu}^{(\ell)}$. 
Now, it is clear that $\widetilde{M}^{(\ell)}$ as stated in \eqref{eq:Mtilde} is already in its normal form. If $\left(
\bs{\nu}^{}_{\vrho},\ts  \widetilde{\bs{\nu}}^{(\ell)}
\right)^{T}$ is the PF eigenvector of $M^{(\ell)}$ corresponding to $\lambda$, then $\widetilde{\bs{\nu}} \, = \, \left(
\bs{\nu}^{}_{\vrho},\ts  \widetilde{\bs{\nu}}^{(\ell)}, \, \cdots, \ts  \widetilde{\bs{\nu}}^{(\ell)} \right)^{T}$ is a PF eigenvector of $\widetilde{M}^{(\ell)}$. 
If there were another linearly independent eigenvector, say $\bs{\mu}$, then the block structure of  $\widetilde{M}^{(\ell)}$ would imply $\bs{\mu} = \left(\bs{\nu}^{}_{\vrho}, \,  \ast \,  \right)^T$. If $\bs{\mu}$ belonged to a different eigenspace, $\widetilde{\bs{\nu}} - \bs{\mu}$ would again be an eigenvector of $\widetilde{M}^{(\ell)}$ to eigenvalue $\lambda$, this time of the form $\bigl(\bs{0}, \, \bs{x}^{}_{1},\, \cdots, \, \bs{x}^{}_{\binom{m-1}{\ell-1}} \bigr)$. 
From the block structure of $\widetilde{M}^{(\ell)}$, it follows that the components $\bs{x}^{}_{i}$ satisfy $B^{(\ell)}\bs{x}^{}_{i} \, =\, \lambda \bs{x}^{}_{i}$. But unless $\bs{x}^{}_i = \bs{0}$, this contradicts the fact that $M^{(\ell)}$ is in normal form, in particular, there are no eigenvectors with eigenvalue $\lambda$ to $B^{(\ell)}$. 

To finish the proof, we have to show that the last rows of $\eqref{eq:M}$ are all non-zero, which holds, and that the corresponding eigenspace for the PF eigenvalue $\lambda$ is one-dimensional. 
We show that $\nu^{}_{a^{}_{1}\, \cdots \, a^{}_{m}}(x^{}_{1}, \, \dots, \, x^{}_{m-1})$ with all $a^{}_{i}\in \cA$, $x^{}_{i} \in \vL^{}_{a^{}_{i+1}} \!\! - \vL^{}_{a^{}_{i}}$ and $x^{}_{i}\neq 0$ is coupled to $\nu^{}_{a\, \cdots \, a}(0, \, \dots, \,0)$ for some $a\in \cA$. This will connect the patch frequency with a letter frequency, effectively showing that the eigenvector $\bs{\nu}^{(\ell)}$ does not decouple and, thus, generates the entire eigenspace. 

The frequency $\nu^{}_{a^{}_{1}\, \cdots \, a^{}_{m}}(x^{}_{1}, \, \dots, \, x^{}_{m-1})$ corresponds to a patch of the form 
\[ T^{}_{a^{}_{1}} T^{}_{w^{}_{1}}T^{}_{a^{}_{2}} T^{}_{w^{}_{2}} \cdots T^{}_{a^{}_{m-1}} T^{}_{w^{}_{m-1}}T^{}_{a^{}_{m}}, \]
where $T^{}_{w^{}_{i}}$ denotes an arbitrary patch such that the length of $T^{}_{a^{}_{i}} T^{}_{w^{}_{i}}$ equals $x^{}_{i}$. In the symbolic setting, this corresponds to a word 
$u \, = \, a^{}_{1}\,w^{}_{1}\,a^{}_{2}\, w^{}_{2} \cdots a^{}_{m-1}\, w^{}_{m-1}\, a^{}_{m}$. 
Primitivity of $\vrho$ implies the existence of a minimal power $j\in \NN$ of $\vrho$ and a letter $a\in \cA$ such that $u$ is a subword of $\vrho^{j}(a)$, showing the coupling, as $\bigl(M^{(m)}\bigr)^j$ contains a non-zero element at the desired position.
\end{proof}

Suppose further that the control points $\underline{\vL}$ possess a model set description (for example, if $\vrho$ is a binary Pisot substitution \cite{HolSol}), meaning that there exists a cut-and-project scheme $(\RR, \, H, \, \cL)$ with a locally compact abelian group $H$ with Haar measure $\mu^{}_{H}$, natural projections $\pi, \, \pi^{}_{\mathrm{int}}$ and a star map $\star:\pi(\cL)\to H$, and relatively compact, topologically regular windows $\varOmega^{}_{i} \subset H$ such that $\vL^{}_{i}$ and $\oplam(\varOmega^{}_{i}) \defeq \left\{ \pi(x) \ \mid \ x\in \cL, \ \pi^{}_{\mathrm{int}}(x)\in \varOmega^{}_{i} \right\}$ differ at most by a set of zero density (for more details on cut-and-project sets and model sets and the standard notation, see \cite[Sec.~7]{BGr} or \cite{Sing}). Then, we have an alternative description of the patch frequency functions using the equidistribution results for model sets.

\begin{prop}
\label{prop:freq_CPS_wind}
For any primitive inflation substitution $\vrho$ with a model set description as above, the patch frequency function $\nu^{}_{a^{}_{1}\, \cdots \, a^{}_{m}}(x^{}_{1}, \, \dots, \, x^{}_{m-1})$ can be expressed, for all  $a\in\cA$, $x^{}_i \in \vL^{}_{a^{}_{i+1}} - \vL^{}_{a^{}_{i}}$ and all \/$m\geqslant 2$, as 
\begin{align*}
    \nu^{}_{a^{}_{1}\, \cdots \, a^{}_{m}}(x^{}_{1}, \, \dots, \, x^{}_{m-1}) & \, = \, 
    \myfrac{1}{\vol(\varOmega)} \int_H \prod_{j=1}^m \boldsymbol{1}^{}_{\varOmega^{}_{a^{}_j}} \Bigl(y + \sum_{i=0}^{j-1} x^{\star}_i \Bigr) \dd\mu^{}_{H}(y)\\[3pt]
    & \, = \, \myfrac{1}{\vol(\varOmega)}\,\vol\Bigl({\displaystyle \bigcap_{j=1}^m} \bigl(\varOmega^{}_{a^{}_{j}} -\sum_{i=0}^{j-1} x^{\star}_i \bigr) \Bigr) \, = \, g^{}_{a^{}_{1}\, \cdots \, a^{}_{m}}(x^{\star}_{1}, \, \dots, \, x^{\star}_{m-1}),
    \end{align*}
where we set $x^{}_{0} =0$, $\bs{1}^{}_{A}$ denotes the characteristic function of a set $A$ and $\varOmega = \bigcup^{}_{a\in\cA} \varOmega^{}_{a}$. The functions 
\[g^{}_{a^{}_{1}\, \cdots \, a^{}_{m}}\! : H^{m-1}  \, \longrightarrow \, \RR^{}_{\geqslant 0}\]
are compactly supported continuous functions. 
\end{prop}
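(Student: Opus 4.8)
The plan is to rewrite the density quotient \eqref{eq:patch_freq_def} entirely inside the internal space $H$ and then read off the two claims from the geometry of the windows. First I would replace each $\vL_{a_i}$ by the regular model set $\oplam(\varOmega_{a_i})$. Since $\vL_i$ and $\oplam(\varOmega_i)$ differ by a set of density zero, and a finite intersection of sets that pairwise agree up to density zero again agrees up to density zero (the difference of the intersections is covered by the union of the individual differences, each shifted by the fixed amount $\sum_{j<i} x_j$), both densities in \eqref{eq:patch_freq_def} are unchanged by this substitution. I would also record that every admissible $x_i \in \vL_{a_{i+1}} - \vL_{a_i} \subset \pi(\cL)$, so that $x_i^{\star}$ is defined, and abbreviate $s_j \defeq \sum_{i=0}^{j-1} x_i^{\star} \in H$ with $x_0 = 0$.

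The decisive algebraic step is to recognise the shifted intersection as a single model set. A point $z$ lies in $\bigcap_{i=1}^{m}\bigl(\oplam(\varOmega_{a_i}) - \sum_{j=0}^{i-1} x_j\bigr)$ exactly when $z \in \pi(\cL)$ and, for each $i$, one has $z^{\star} + s_i \in \varOmega_{a_i}$, using that $\star$ is a homomorphism on $\pi(\cL)$. Hence
\[\bigcap_{i=1}^{m}\Bigl(\oplam(\varOmega_{a_i}) - \sum_{j=0}^{i-1} x_j\Bigr) \, = \, \oplam(W), \qquad W \defeq \bigcap_{i=1}^{m}\bigl(\varOmega_{a_i} - s_i\bigr).\]
Because $\partial W \subset \bigcup_i \partial(\varOmega_{a_i} - s_i)$ and each $\varOmega_{a_i}$ is relatively compact and topologically regular, $W$ is relatively compact with $\mu^{}_{H}(\partial W) = 0$, so $\oplam(W)$ is again a regular model set.

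Next I would invoke the uniform-density (equidistribution) theorem for regular model sets, see \cite[Sec.~7]{BGr}, in the form $\dens(\oplam(V)) = c_{\cL}\,\mu^{}_{H}(V)$ for every relatively compact $V \subset H$ with $\mu^{}_{H}(\partial V) = 0$, where the constant $c_{\cL}$ depends only on the cut-and-project scheme. Applying this to $V = W$ and, since each point of $\vL$ carries a unique type so that the windows $\varOmega_a$ are pairwise disjoint up to measure zero, to $V = \varOmega$ (giving $\dens(\vL) = c_{\cL}\,\mu^{}_{H}(\varOmega) = c_{\cL}\,\vol(\varOmega)$), the constant $c_{\cL}$ cancels in \eqref{eq:patch_freq_def} and leaves
\[\nu^{}_{a^{}_1 \cdots a^{}_m}(x^{}_1, \dots, x^{}_{m-1}) \, = \, \myfrac{1}{\vol(\varOmega)}\,\vol\Bigl(\bigcap_{j=1}^{m}\bigl(\varOmega_{a_j} - s_j\bigr)\Bigr).\]
The integral form then follows from the pointwise identity $\prod_{j=1}^{m} \bs{1}^{}_{\varOmega_{a_j}}(y + s_j) = \bs{1}^{}_{W}(y)$ together with $\int_H \bs{1}^{}_{W}\dd\mu^{}_{H} = \vol(W)$, which simultaneously defines $g_{a_1 \cdots a_m}$.

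Finally I would verify the two properties of $g$. For compact support, a nonempty intersection requires $\varOmega_{a_j} \cap (\varOmega_{a_{j+1}} - x_j^{\star}) \neq \varnothing$ for consecutive indices, forcing each $x_j^{\star} = s_{j+1} - s_j$ into the relatively compact set $\varOmega_{a_{j+1}} - \varOmega_{a_j}$; hence $g$ vanishes outside a relatively compact subset of $H^{m-1}$ and has compact support. Continuity is the only genuinely analytic point: writing $g$ as the integral of a finite product of translated indicators, I would bound its variation under a small change of the arguments by a telescoping sum in which each term measures the set where a single indicator $\bs{1}^{}_{\varOmega_{a_j}}$ has been altered; continuity of translation in $L^1(H)$ together with the regularity $\mu^{}_{H}(\partial\varOmega_{a_j}) = 0$ makes each such term small. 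I expect this last estimate---propagating the boundary-measure-zero control from one window to the full product---to be the main technical obstacle, while everything preceding it reduces to bookkeeping once the shifted intersection is identified as a regular model set.
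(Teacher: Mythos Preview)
Your proposal is correct and follows essentially the same route as the paper: the integral/volume formula via equidistribution for regular model sets (the paper simply cites \cite{Moody02} where you spell out the identification of the shifted intersection as $\oplam(W)$ and apply the density formula), compact support via the difference sets $\varOmega_{a_{j+1}}-\varOmega_{a_j}$, and continuity via the $L^1$/$L^{\infty}$ properties of the indicator factors. Your write-up is considerably more detailed than the paper's proof; one small remark is that the boundary condition $\mu^{}_{H}(\partial\varOmega_{a_j})=0$ is not needed in the continuity step, since continuity of translation in $L^1(H)$ already holds for arbitrary $L^1$ functions.
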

\begin{proof}
The representation as the integral is a standard result using the equidistribution results, see \cite{Moody02}. Since $\boldsymbol{1}^{}_{\varOmega^{}_{j}+y}$ are $L^1$ and $L^{\infty}$ functions with a compact support (as $\varOmega^{}_{i}$ are relatively compact) for all $j$ and all $y\in H$, the same holds for their finite product, whose integral is a continuous function w.r.t. all $x^{\star}_i$ by the standard analysis argument. Since the windows are compact, the set of translations for which two windows have a non-empty intersection is also compact. Now, as the intersection of two compact sets is also compact, we can proceed inductively and conclude that, after finitely many steps, the set of all mutual translations for which $m$ windows do have a non-empty intersection is a compact set in $H^{m-1}$, which finishes the proof. 
\end{proof}

\begin{remark}
For single-component Euclidean model sets (i.e. $H = \RR^d$, for some $d\geqslant1$), the functions $g$ were considered in \cite{DM09} (with a slightly different choice of arguments) under the name $\mathcal{I}^{(m-1)}$. The two- and three-tile patch frequencies (of two and three point correlations) are, in this setting, sufficient to reconstruct the set $\vL$ up to a set of zero density \cite[Thm.~3]{DM09}. We note that this is no longer true once we step beyond the Euclidean setting \cite{DM12}.  

We also mention that the functions $g$ are Fourier transformable. If $H = \RR^d$, a direct computation yields 
\[
    \widehat{\, g^{}_{a^{}_{1}\, \cdots \, a^{}_{m}}\,} (k^{}_{1}, \, \dots, \, k^{}_{m-1}) \, = \,  \myfrac{1}{\vol(\varOmega)} \, \widehat{\bs{1}^{}_{\varOmega^{}_{a^{}_{1}}}}(k^{}_{1}) \, \widehat{\,\bs{1}^{}_{\varOmega^{}_{a^{}_{m}}}\,} (k^{}_{m-1}) \prod_{j=2}^{m-2} \widehat{\,\bs{1}^{}_{\varOmega^{}_{a^{}_{j}}}\,}(k^{}_{j-1}-k^{}_{j}).
\]
The question of Fourier transformability of these functions is natural and could be the first step towards a finer spectral decomposition of the spectral measure of the underlying dynamical system via refinement of the pair-correlation measures (introduced using the pair-correlation functions) in \cite{BGM19, NeilDiss}. These pair-correlation measures are Fourier transformable, and the proof of this statement relies on their representation via Eberlein convolution. A first attempt to define a patch frequency measure and provide a generalisation of the Eberlein convolution, which would fit into this setting and framework, can be found in \cite{Jan}. \exend
\end{remark}

\section{From self-similar to symbolic settings}
This last section explains how to pass from the geometric, self-similar description of one-dimensional tilings to other geometric descriptions and, in particular, back to the symbolic setting. We demonstrate this approach first for the two-tile frequencies in the Fibonacci case, and then state the general results. 

\subsection{Fibonacci example}
Consider the usual self-similar Fibonacci tiling\index{substitution!Fibonacci} with prototiles of length $\tau$ and $1$ arising from the substitution rule $\vrho = (a\mapsto ab, b\mapsto a)$. In order to derive the relative frequencies (pair correlations) for the symbolic case (to be understood as a tiling with prototiles of the same length $1$ and different labels $a$, $b$), we consider the pair-correlation functions in the geometric setting (two-tile frequencies) $\nu^{}_{\alpha \, \beta}$ satisfying the following set of renormalisation equations, which first appeared in \cite{BG-renorm}.
\begin{equation}
\label{eq:Fibo_corr_renorm}
\begin{split}
   \nu^{}_{aa} (x) \, & = \, \myfrac{1}{\tau} \Big(
     \nu^{}_{aa} \bigl(\tfrac{x}{\tau}\bigr) + \nu^{}_{ab} \bigl(\tfrac{x}{\tau}\bigr)
    +\nu^{}_{ba} \bigl(\tfrac{x}{\tau}\bigr) + \nu^{}_{bb} \bigl(\tfrac{x}{\tau}\bigr)
       \Big) , \\[2pt]
   \nu^{}_{ab} (x) \, & = \, \myfrac{1}{\tau} \Big(
      \nu^{}_{aa} \bigl(\tfrac{x-\tau}{\tau} \bigr) + 
      \nu^{}_{ba} \bigl(\tfrac{x-\tau}{\tau} \bigr)  \Big) , \\[2pt]
   \nu^{}_{ba} (x) \, & = \, \myfrac{1}{\tau} \Big(
      \nu^{}_{aa} \bigl(\tfrac{x+\tau}{\tau} \bigr) + 
      \nu^{}_{ab} \bigl(\tfrac{x+\tau}{\tau} \bigr)  \Big), \\[2pt]
   \nu^{}_{bb} (x) \, & = \, \myfrac{1}{\tau} \ts 
      \nu^{}_{aa} \bigl(\tfrac{x}{\tau}\bigr),
\end{split}
\end{equation}

We denote the pair correlations in the symbolic case by $\nu^{\mathrm{s}}_{\alpha \beta}$. By choosing the tile lengths to be $1$, we immediately get that $\nu^{\mathrm{s}}_{\alpha \beta}(m)$ can only be non-zero for $m\in \ZZ$. 

Let $m\in \NN$. Then, $\nu^{\mathrm{s}}_{\alpha \beta}(m)$ is the relative frequency of a cylinder set consisting of $m+1$ tiles with the first tile being  $\alpha$ and the last $\beta$. 
To obtain $\nu^{\mathrm{s}}_{\alpha \beta}(m)$, add all relative frequencies of all patches (in the self-similar case) consisting of tiles $\alpha$ on the left and~$\beta$ on the right, such that there are exactly $m-1$ tiles between them. 
The number of different tiles of type $\alpha$ and~$\beta$ then gives the distances, for which one has to take the $\nu^{}_{\alpha\beta}$ into account. 

We demonstrate this for $\nu^{\mathrm{s}}_{\alpha \beta}(4)$. Since the Fibonacci chain is Sturmian \cite{Loth}, there are exactly 6 different legal subwords of length 5, namely $aabaa, \ aabab,\ baaba, \ ababa,\ abaab, \ babaa $. 
No legal subword starting and ending with $b$ implies $\nu^{\mathrm{s}}_{bb}(4)=0$.  Now, for $\nu^{\mathrm{s}}_{aa}(4)$, observe that the two subwords starting and ending with $a$ can be identified with patches (using the natural tile lengths $\tau$ and $1$), where the distance between the first and last left-end control point is $3\tau +1$ ($aabaa$), and $2\tau+2$ ($ababa$). This yields 
\[\nu^{\mathrm{s}}_{aa}(4) \, = \, \nu^{}_{aa}(3\tau+1) + \nu^{}_{aa}(2\tau+2) = \tau^{-3}. \]

We summarise these observations as follows. 

\begin{prop}
\label{prop:fibo_symb_pair}
Let $\nu^{}_{\alpha\beta}$ denote the pair correlations of the Fibonacci tiling and let $\nu^{\mathrm{s}}_{\alpha \beta}$ denote those of the symbolic one. Then, for any $m\in \NN$ and any $\beta \in \{a,b\}$, one has 
\begin{equation}
\label{eq:fibo_symb_vs_geom}
\nu^{\mathrm{s}}_{a\beta}(m) = \sum_{n=1}^m \nu^{}_{a\beta}(n\tau + m-n) \qquad \mbox{and} \qquad \nu^{\mathrm{s}}_{b\beta}(m) = \sum_{n=0}^{m-1} \nu^{}_{b\beta}(n\tau + m-n). 
\end{equation}
Moreover, we have $\nu^{\mathrm{s}}_{ab}(m) = \nu^{\mathrm{s}}_{ba}(m)$ and $\sum_{\alpha,\, \beta} \nu^{\mathrm{s}}_{\alpha\beta}(m) =1$ for any $m\in \ZZ$. 
\end{prop}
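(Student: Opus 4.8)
The plan is to set up a precise dictionary between the symbolic sites and the left control points of the self-similar tiling, and then to read off the three assertions from this dictionary together with two elementary marginal identities. Under the natural tile lengths $\tau$ and $1$, identify the $i$-th symbol $w^{}_{i}$ with the left endpoint of the tile $T^{}_{w^{}_{i}}$. This is a bijection between symbolic sites and control points of $\vL$, and under it the relative frequency of a symbolic cylinder (counted per site) coincides with the corresponding control-point frequency (counted per point, i.e. with the normalisation $1/\dens(\vL)$ appearing in \eqref{eq:patch_freq_def}).

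For the two summation formulas I would argue as follows. Consider a legal word of length $m+1$ with first letter $\alpha$ and last letter $\beta$, and suppose that exactly $n$ of its first $m$ letters $w^{}_{0},\dots,w^{}_{m-1}$ are $a$'s. Then the geometric distance between the left endpoints of the first and the last tile is precisely $n\tau+(m-n)$. The crucial point is that this assignment is injective in the right sense: if $p\tau + q = n\tau + (m-n)$ with $p,q,n,m\in\ZZ$, then $(p-n)\tau = (m-n)-q$ forces $p=n$ and $q=m-n$ by irrationality of $\tau$, so a control-point distance of the form $n\tau+(m-n)$ can be realised only by a block of exactly $n$ tiles $a$ and $m-n$ tiles $b$, hence by exactly $m$ tiles. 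Consequently $\nu^{}_{\alpha\beta}(n\tau+m-n)$ equals the frequency of the sub-event ``$\alpha$ at site $0$, $\beta$ at site $m$, with precisely $n$ letters $a$ among sites $0,\dots,m-1$''. These sub-events are disjoint, and summing them over the admissible range of $n$ recovers $\nu^{\mathrm{s}}_{\alpha\beta}(m)$. When $\alpha=a$ the first letter already forces $n\geqslant 1$ while $n\leqslant m$ is trivial, giving $1\leqslant n\leqslant m$; when $\alpha=b$ the first letter is not an $a$ and $w^{}_{m}$ is excluded, giving $0\leqslant n\leqslant m-1$. Illegal configurations contribute $0$ automatically, since the corresponding geometric frequency vanishes (it is supported on $\vL^{}_{a^{}_{j+1}}-\vL^{}_{a^{}_{j}}$), so one may sum over the full stated ranges.

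For the symmetry $\nu^{\mathrm{s}}_{ab}(m)=\nu^{\mathrm{s}}_{ba}(m)$ I would bypass the renormalisation equations and use only shift-invariance of the symbolic measure $\mu$ on a two-letter alphabet. Writing $p^{}_{\alpha\beta}(m)\defeq\nu^{\mathrm{s}}_{\alpha\beta}(m)$, summing over the second letter gives the marginal $p^{}_{aa}(m)+p^{}_{ab}(m)=\nu^{}_{a}$ (the frequency of $a$ at site $0$), whereas summing over the first letter and using shift-invariance gives $p^{}_{aa}(m)+p^{}_{ba}(m)=\nu^{}_{a}$ (the frequency of $a$ at site $m$). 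Subtracting yields $p^{}_{ab}(m)=p^{}_{ba}(m)$ for every $m\in\ZZ$. The normalisation follows the same way: $\sum^{}_{\alpha,\beta}\nu^{\mathrm{s}}_{\alpha\beta}(m)=\sum^{}_{\alpha}\bigl(\sum^{}_{\beta}p^{}_{\alpha\beta}(m)\bigr)=\sum^{}_{\alpha}\nu^{}_{\alpha}=1$, again using the marginal identity and that the letter frequencies sum to $1$; this is valid for all $m\in\ZZ$, the case $m=0$ reducing to $p^{}_{\alpha\beta}(0)=\nu^{}_{\alpha}\delta^{}_{\alpha\beta}$ and negative $m$ being covered by the same shift-invariant bookkeeping.

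I expect the only genuine obstacle to lie in the first part: one must argue carefully that the geometric-to-symbolic dictionary is a true bijection of frequencies, so that the per-point and per-site normalisations agree, and above all that the irrationality of $\tau$ makes the map (number of intermediate $a$-tiles)$\,\mapsto\,$(control-point distance) injective. This injectivity is exactly what guarantees that the sub-events indexed by $n$ are pairwise disjoint and jointly exhaust the symbolic event, so that no term is double-counted and none is missed. Everything beyond this is bookkeeping of the summation ranges and the two-line marginal computation.
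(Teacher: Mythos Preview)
Your argument is correct. For the two summation formulas you reproduce essentially the paper's reasoning (with the irrationality of $\tau$ made fully explicit, as it later is in Lemma~\ref{lem:rat_indep_lengths}), and for the normalisation $\sum_{\alpha,\beta}\nu^{\mathrm{s}}_{\alpha\beta}(m)=1$ your marginal computation is just a rephrasing of the paper's observation that every pair of lattice points at distance $m$ is counted.

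The one genuine difference is your proof of $\nu^{\mathrm{s}}_{ab}(m)=\nu^{\mathrm{s}}_{ba}(m)$. The paper appeals to palindromicity of the symbolic Fibonacci hull, i.e.\ to a specific combinatorial property of this particular substitution. Your argument via the two marginal identities $p^{}_{aa}(m)+p^{}_{ab}(m)=\nu^{}_{a}=p^{}_{aa}(m)+p^{}_{ba}(m)$ is more elementary and strictly more general: it uses nothing beyond shift-invariance of $\mu$ and the fact that the alphabet has two letters, so it would give the same symmetry for \emph{any} uniquely ergodic two-letter subshift, palindromic or not. The paper's route, on the other hand, identifies the structural reason (mirror symmetry of the language) that one might expect such an identity in the Fibonacci case specifically, and that viewpoint extends to larger alphabets where your two-letter marginal trick no longer applies.
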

\begin{proof}
The frequencies $\nu^{\mathrm{s}}_{\alpha \beta}$ exist uniformly due to unique ergodicity, and the interplay between the geometric and symbolic description has been explained above. The different summations reflect that, in the case of $\nu^{\mathrm{s}}_{a\beta}$, one has to include at least one $a$-tile, whereas for the $\nu^{\mathrm{s}}_{b\beta}$, one has to count at least one tile $b$. The symmetry relation follows from palindromicity of the \emph{symbolic} Fibonacci hull; consult \cite[Sec.~4.3]{BGr}. The last claim says that the relative frequency of two arbitrary points of the integer lattice $\ZZ$ within distance $m\in \ZZ$ is always 1, which is clearly true. 
\end{proof}

Since the set of control points of the geometric Fibonacci tiling is a model set arsing from a cut-and-project scheme $(\RR,\, \RR,\, \cL)$, $\cL = \langle (1,1)^{T}, \, (\tau,\, 1-\tau)^{T} \rangle^{}_{\ZZ}$ via a window $\varOmega=[-1, \, \tau)$, with $\tau = \myfrac{1+\sqrt{5}}{2}$ (for details see \cite[Sec.~7]{BGr}), we obtain a stronger result. 

\begin{lemma}
\label{lem:fibo_corr_restriction_symb}
For the relative frequencies of the symbolic Fibonacci chain $\nu^{\mathrm{s}}_{\alpha \beta}$, we have 
\[ \nu^{\mathrm{s}}_{\alpha \beta}(m) \, = \, \nu^{}_{\alpha \beta} \bigl( \bigl\lfloor \tfrac{m}{\tau} \bigr \rfloor + m - \bigl\lfloor \tfrac{m}{\tau} \bigr \rfloor \bigr) + \nu^{}_{\alpha \beta}\bigl( \bigl\lceil \tfrac{m}{\tau} \bigr \rceil + m - \bigl\lceil \tfrac{m}{\tau} \bigr \rceil \bigr). \]
\end{lemma}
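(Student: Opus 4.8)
The plan is to derive the two-term identity directly from the summation formula of Proposition~\ref{prop:fibo_symb_pair} by showing that, for each fixed $m$, at most two of the summands are non-zero. Recall from that proposition that
\[
\nu^{\mathrm{s}}_{a\beta}(m) = \sum_{n=1}^{m} \nu^{}_{a\beta}(n\tau + m - n), \qquad \nu^{\mathrm{s}}_{b\beta}(m) = \sum_{n=0}^{m-1} \nu^{}_{b\beta}(n\tau + m - n),
\]
where $n$ is the number of $a$-tiles (of natural length $\tau$) among the first $m$ tiles of the underlying length-$(m{+}1)$ patch, the remaining $m-n$ tiles being $b$-tiles of length $1$; hence the geometric span between the outer control points equals $n\tau + (m-n)$. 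The whole problem thus reduces to determining which values of $n$ are actually realised by a legal patch.

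First I would observe that $\nu^{}_{\alpha\beta}(n\tau + m - n)\neq 0$ precisely when there is a legal factor of length $m+1$ beginning with $\alpha$, ending with $\beta$, and containing exactly $n$ occurrences of $a$ among its first $m$ positions; otherwise $n\tau + m - n \notin \vL^{}_{\beta} - \vL^{}_{\alpha}$ and the frequency vanishes, as noted directly after \eqref{eq:patch_freq_def}. The heart of the argument is then the \emph{balance property} of the Fibonacci word: being Sturmian \cite{Loth}, any two factors of the same length $m$ differ in their number of $a$'s by at most one. Since the letter frequency of $a$ equals $1/\tau$ and $m/\tau \notin \ZZ$ for $m\geqslant 1$, the $a$-count of \emph{every} length-$m$ factor is therefore either $\lfloor m/\tau\rfloor$ or $\lceil m/\tau\rceil$. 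Consequently every summand with $n \notin \{\lfloor m/\tau\rfloor,\, \lceil m/\tau\rceil\}$ is zero.

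Substituting the two admissible values of $n$ into the formula of Proposition~\ref{prop:fibo_symb_pair} collapses both sums to the stated two terms, $\nu^{}_{\alpha\beta}(\lfloor m/\tau\rfloor\tau + m - \lfloor m/\tau\rfloor) + \nu^{}_{\alpha\beta}(\lceil m/\tau\rceil\tau + m - \lceil m/\tau\rceil)$, uniformly in the boundary letters $\alpha,\beta$, since the balance count is independent of them. When one of the two values is not realised for a given pair $(\alpha,\beta)$ — or falls outside the summation range, as for small $m$ — the corresponding term is simply $0$, which is consistent with the claimed formula. As a sanity check, for $m=4$ one has $m/\tau \approx 2.47$, so $\lfloor m/\tau\rfloor = 2$ and $\lceil m/\tau\rceil = 3$, reproducing $\nu^{\mathrm{s}}_{aa}(4) = \nu^{}_{aa}(2\tau+2) + \nu^{}_{aa}(3\tau+1)$ obtained above.

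The main obstacle I anticipate is the precise justification of the balance count, namely that exactly the two consecutive integers $\lfloor m/\tau\rfloor$ and $\lceil m/\tau\rceil$ (and no others) occur as $a$-counts of length-$m$ factors. Rather than invoking Sturmian balance as a black box, one can argue directly from the model set structure recalled just before the statement: labelling each point $a$ or $b$ according to whether its star coordinate $x^{\star}$ lands in the appropriate sub-window of $\varOmega = [-1,\tau)$, the $a$-count of a length-$m$ factor becomes a counting function for an equidistributed orbit, whose possible values are pinned down by a cut-and-project/equidistribution argument that simultaneously records which boundary letters $\alpha,\beta$ are admissible for each of the two lengths. This sharper bookkeeping is exactly what upgrades the finite sum of Proposition~\ref{prop:fibo_symb_pair} to the exact two-term expression.
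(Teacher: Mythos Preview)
Your argument is correct, but it follows a different path from the paper's. You identify the admissible values of $n$ combinatorially, via the Sturmian \emph{balance property}: factors of length $m$ have $a$-count in $\{\lfloor m/\tau\rfloor,\lceil m/\tau\rceil\}$ because all such counts differ by at most~$1$ and must average to the irrational $m/\tau$. The paper instead exploits the model set description directly: $\nu^{}_{\alpha\beta}(x)$ can be non-zero only when $x\in\vL-\vL=\oplam([-\tau^2,\tau^2])$, so one imposes $(n\tau+m-n)^{\star}=n(1-\tau)+m-n\in[-\tau^2,\tau^2]$, solves the resulting linear inequality $\tfrac{m}{\tau}-1\leqslant n\leqslant\tfrac{m}{\tau}+1$, and reads off the same two integers. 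Your route is arguably more elementary for Fibonacci and makes the link to Sturmian combinatorics explicit; the paper's route is what generalises --- the star-map constraint $\sum_j m_j\ell_j^{\star}\in\varOmega-\varOmega$ is precisely the mechanism behind the remark following Theorem~\ref{thm:geom_to_symb}, and it applies uniformly to any substitution admitting a model set description, where no balance property is available. Your final paragraph in fact gestures at exactly this cut-and-project argument, so you have both proofs in hand; the paper simply chooses the second as primary.
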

\begin{proof}
Since $\nu_{\alpha \beta}(x) \neq 0$ only if $z\in \vL - \vL = \oplam([-\tau^2, \, \tau^2])$, we can restrict the possible $n$ for which $\nu^{}_{\alpha\beta}(n\tau + m-n)$ can be non-zero, and reduce the number of summands in Eq.~\eqref{eq:fibo_symb_vs_geom}. For that, consider $(n\tau + m-n)^{\star} = n(1-\tau) + m -n \in [-\tau^2, \, \tau^2]$, which implies 
\[ \tfrac{m}{\tau} -1 \, \leqslant \,  n \, \leqslant \, \tfrac{m}{\tau} +1. \]
As both $m,n \in \ZZ$ and since $\tfrac{m}{\tau} \notin \ZZ$ for non-zero $m$, we have $n \in \bigl\{ \lfloor \tfrac{m}{\tau} \bigr \rfloor, \, \lceil \tfrac{m}{\tau} \bigr \rceil \bigr\}$.
\end{proof}

\subsection{General results}
Note first that we used the fact that the natural tile lengths of the geometric realisation of the Fibonacci tiling are rationally independent\index{rational independence}, which allowed us to identify the total number of tiles within a distance given by a number $n\tau + m$. To ensure that the same holds for a primitive substitution $\varrho$, we need to assume that the characteristic polynomial\index{characteristic polynomial} of $M^{}_{\varrho}$ is irreducible (which is a condition appearing quite often; compare, for example, \cite[Prop.~2.17]{Yaari})

\begin{lemma}
\label{lem:rat_indep_lengths}
Let $M \in \ZZ^{n\times n}$ be a non-negative irreducible matrix with irreducible characteristic polynomial. Let further $\bs{v}$ be its PF eigenvector corresponding to the leading eigenvalue $\lambda$. Then, the entries of $v$ are rationally independent. 
\end{lemma}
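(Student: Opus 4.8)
The plan is to exploit the fact that the entries of $\bs{v}$ lie in the degree-$n$ number field $K \defeq \QQ(\lambda)$ and to use the Galois action to convert a single $\QQ$-linear relation among them into a whole system of relations against the conjugate eigenvectors, from which independence will follow. Concretely, rational independence of $v^{}_{1}, \dots, v^{}_{n}$ is precisely the statement that no nonzero rational covector $\bs{c} = (c^{}_{1}, \dots, c^{}_{n})$ satisfies $\bs{c}^{T} \bs{v} = 0$, so I would argue by contradiction, assuming such a $\bs{c}$ exists.

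First I would record the number-theoretic input. Since $M$ has integer entries and its characteristic polynomial $p$ is irreducible over $\QQ$, and $\QQ$ has characteristic zero, $p$ is separable; hence it has $n$ distinct roots $\lambda = \lambda^{}_{1}, \dots, \lambda^{}_{n}$, which are exactly the Galois conjugates of $\lambda$. In particular $\lambda$ is a simple eigenvalue, so its eigenspace is one-dimensional, and solving $(M - \lambda I)\bs{v} = \bs{0}$ over $K$ (equivalently, taking a nonzero column of $\mathrm{adj}(\lambda I - M)$, whose entries are integer polynomials in $\lambda$) shows that $\bs{v}$ may be taken with all entries in $K$. The same applies to each $\lambda^{}_{k}$.

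Next I would spread the hypothetical relation by Galois theory. Let $L$ be the splitting field of $p$ and, for each $k$, pick $\sigma^{}_{k} \in \mathrm{Gal}(L/\QQ)$ with $\sigma^{}_{k}(\lambda) = \lambda^{}_{k}$. Applying $\sigma^{}_{k}$ entrywise to $M\bs{v} = \lambda\bs{v}$ and using that $M$ is integer (hence fixed by $\sigma^{}_{k}$) yields $M\bs{v}^{(k)} = \lambda^{}_{k}\bs{v}^{(k)}$, where $\bs{v}^{(k)} \defeq \sigma^{}_{k}(\bs{v})$; thus each $\bs{v}^{(k)}$ is an eigenvector for the \emph{distinct} eigenvalue $\lambda^{}_{k}$. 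Eigenvectors attached to distinct eigenvalues are linearly independent, so $\{\bs{v}^{(1)}, \dots, \bs{v}^{(n)}\}$ is a basis of $\CC^{n}$. Because $\bs{c}$ is rational, it is fixed by every $\sigma^{}_{k}$, so applying $\sigma^{}_{k}$ to $\bs{c}^{T}\bs{v} = 0$ gives $\bs{c}^{T}\bs{v}^{(k)} = 0$ for all $k$. A covector that annihilates a basis must vanish, forcing $\bs{c} = \bs{0}$, a contradiction; hence $v^{}_{1}, \dots, v^{}_{n}$ are rationally independent. This conclusion is manifestly scale-invariant, so the particular normalisation of the PF eigenvector is irrelevant.

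The only genuinely delicate points—rather than true obstacles—are ensuring that the entries of $\bs{v}$ really lie in $K$ and that $\sigma^{}_{k}$ carries the eigenvector for $\lambda$ to one for $\lambda^{}_{k}$; both become transparent once one observes that simplicity of the roots makes every eigenspace one-dimensional, leaving no ambiguity in the Galois transport. I would also take care to use only $\QQ$-linear independence (which is all that rational independence demands) rather than any stronger integrality statement, since the basis argument delivers exactly the former. Everything else is routine linear algebra and elementary Galois theory.
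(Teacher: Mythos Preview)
Your proof is correct and takes a genuinely different route from the paper's. The paper argues module-theoretically inside $\ZZ[\lambda]$: from $M\bs{v}=\lambda\bs{v}$ it observes that the $\ZZ$-span $\langle v^{}_{1},\dots,v^{}_{n}\rangle^{}_{\ZZ}$ is closed under multiplication by $\lambda$, hence contains $v^{}_{1},\lambda v^{}_{1},\dots,\lambda^{n-1}v^{}_{1}$; since $1,\lambda,\dots,\lambda^{n-1}$ are $\QQ$-independent (the characteristic polynomial being the minimal polynomial), this $\ZZ$-module has rank $n$, forcing the $v^{}_{i}$ to be rationally independent. Your argument instead transports a hypothetical rational relation $\bs{c}^{T}\bs{v}=0$ by the Galois action to annihilate all $n$ conjugate eigenvectors, which form a basis by distinctness of the eigenvalues. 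The paper's proof avoids any appeal to Galois theory and stays entirely within elementary $\ZZ$-module considerations; your proof is arguably more transparent about \emph{why} irreducibility (via separability and simple eigenvalues) is the driving hypothesis, and it makes no use of non-negativity or irreducibility of $M$ as a matrix --- indeed, neither proof really needs the PF input beyond the trivial fact that $\bs{v}\neq\bs{0}$, so both establish the stronger statement that any eigenvector of an integer matrix with irreducible characteristic polynomial has rationally independent entries.
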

\begin{proof}
Since $M\bs{v} = \lambda \bs{v}$, we can assume that all $v^{}_{i} \in \ZZ[\lambda]$. Since $\lambda$ is a root of monic irreducible polynomial of degree $n$, $\ZZ[\lambda]$ is a $\ZZ$-module of rank $n$. We show that $\langle v^{}_{1}, \, \dots, \, v^{}_{n} \rangle^{}_{\ZZ}$ is also a $\ZZ$-module of rank $n$, which is equivalent to saying that the entries $v^{}_{i}$ are rationally independent. 

As $\bs{v}$ is the PF eigenvector, $v^{}_{1} \neq 0$. Since $M\bs{v} = \lambda \bs{v}$, we have $\lambda v^{}_{i} \in \langle v^{}_{1}, \, \dots, \, v^{}_{n} \rangle^{}_{\ZZ}$ for all $i$, and by linearity, we obtain that $ x \in \langle v^{}_{1}, \, \dots, \, v^{}_{n} \rangle^{}_{\ZZ}$ implies $\lambda x \in \langle v^{}_{1}, \, \dots, \, v^{}_{n} \rangle^{}_{\ZZ}$. In particular, 
\[ \langle v^{}_{1}, \, \lambda v^{}_{1}, \, \dots, \, \lambda^{n-1}v^{}_{1} \rangle^{}_{\ZZ} \, \subseteq \, \langle v^{}_{1}, \, \dots, \, v^{}_{n} \rangle^{}_{\ZZ} \, \subseteq \, \ZZ[\lambda].   \]
Since $1,\, \lambda, \, \dots, \, \lambda^{n-1}$ are linearly independent over $\QQ$ as the minimal polynomial of $\lambda$ coincides with the characteristic polynomial of $M$ (which is of order $n$), $v^{}_{1},\, \lambda v^{}_{1}, \, \dots, \, \lambda^{n-1}v^{}_{1}$ are also rationally independent. Consequently, $\langle v^{}_{1},\, \lambda v^{}_{1}, \, \dots, \, \lambda^{n-1}v^{}_{1} \rangle^{}_{\ZZ}$ is a $\ZZ$-module of rank $n$ and so is $\langle v^{}_{1}, \, \dots, \, v^{}_{n} \rangle^{}_{\ZZ}$. 
\end{proof}

The same property also holds for the left eigenvector. In our setting, with a primitive substitution~$\varrho$ over $\cA^{}$ with $\lambda$ being an algebraic integer of order $n$ (equivalent to the irreducibility of the characteristic polynomial of $M^{}_{\varrho}$), the lemma implies that the natural tile lengths  $\ell^{}_{i}$ are rationally independent.  

\begin{theorem}
\label{thm:geom_to_symb}
Let $\varrho$ be a primitive substitution over $\cA^{}$ such that the characteristic polynomial of $M^{}_{\vrho}$ is irreducible. Let $\ell^{}_{j}$ denote the natural tile lengths of the geometric inflation rule induced by $\varrho$. Further, let $\nu^{}_{\alpha \, \beta}$ denote the pair correlation of the self-similar tiling. Then, for any element of the symbolic hull $\XX(\vrho)$, the pair correlations $\nu^{\mathrm{s}}_{\alpha \beta}$ exists uniformly and, for all $a^{}_{i}, \, \beta \in \cA^{}_n$ and for all $m\in \NN$, we have 
\begin{equation}
\label{eq:pair_corr_symb_from_geom}
    \nu^{\mathrm{s}}_{a^{}_{i}  \beta} \, (m) \, = \, \sum_{\substack{m^{}_{j} \ts \geqslant \ts  0, \, m^{}_i \ts \neq \ts 0 \\ \sum m^{}_j \, =\,  m}} \nu^{}_{a^{}_{i} \beta} \Bigl( \sum_{j=1}^n m^{}_{j}\ell^{}_{j} \Bigr).
\end{equation}
\end{theorem}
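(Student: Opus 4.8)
The plan is to generalise the Fibonacci computation (Proposition~\ref{prop:fibo_symb_pair}) to an arbitrary primitive substitution whose characteristic polynomial is irreducible. First I would establish the existence and uniformity of $\nu^{\mathrm{s}}_{\alpha\beta}$: the symbolic hull $\XX(\vrho)$ is strictly ergodic, so the pair correlations exist uniformly and are independent of the chosen element, exactly as recalled in Section~2. This reduces the problem to proving the identity~\eqref{eq:pair_corr_symb_from_geom} relating the symbolic pair correlation at integer distance $m$ to a finite sum of geometric pair correlations.

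The core of the argument is a dictionary between the symbolic and geometric descriptions. In the symbolic picture, $\nu^{\mathrm{s}}_{a^{}_{i}\beta}(m)$ is the frequency of a patch whose first tile is $a^{}_{i}$, whose last tile (the $(m{+}1)$-st) is $\beta$, and in which the two marked tiles are separated by exactly $m$ intervening positions (counting $m-1$ tiles strictly between them). Passing to the geometric realisation with natural tile lengths $\ell^{}_{j}$, the \emph{same} combinatorial patch has a geometric distance between the two marked left-control-points equal to $\sum_{j=1}^n m^{}_{j}\ell^{}_{j}$, where $m^{}_j$ is the number of tiles of type $j$ lying in the block $T^{}_{a^{}_{i}}T^{}_{w^{}_{1}}\cdots$ up to (but not including) $\beta$; these counts satisfy $m^{}_j\geqslant 0$ and $\sum_j m^{}_j = m$. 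The constraint $m^{}_i\neq 0$ records that the leading tile is of type $a^{}_i$, so at least one tile of type $a^{}_i$ is present. Summing the geometric frequencies $\nu^{}_{a^{}_{i}\beta}$ over all admissible type-count vectors $(m^{}_1,\dots,m^{}_n)$ thus reassembles precisely the symbolic frequency, which gives~\eqref{eq:pair_corr_symb_from_geom}.

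The step that requires genuine care, and where I would invoke the preceding lemmas, is the well-definedness of this resummation, namely that distinct type-count vectors $(m^{}_j)$ give distinct geometric distances $\sum_j m^{}_j\ell^{}_j$, and that no geometric distance is double-counted or split. This is exactly where rational independence of the $\ell^{}_j$ enters: by Lemma~\ref{lem:rat_indep_lengths}, irreducibility of the characteristic polynomial of $M^{}_{\vrho}$ forces the natural tile lengths to be rationally independent, so the map $(m^{}_1,\dots,m^{}_n)\mapsto\sum_j m^{}_j\ell^{}_j$ is injective on $\ZZ^n_{\geqslant 0}$. Hence each geometric distance realised by a legal patch corresponds to a unique composition of $m$ into type counts, and the finite sum on the right-hand side of~\eqref{eq:pair_corr_symb_from_geom} is an honest partition of the symbolic frequency into its geometric constituents with no overlaps.

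The main obstacle I anticipate is not the algebra of tile lengths but the bookkeeping that the correspondence is a genuine bijection between legal symbolic patches of combinatorial width $m$ and legal geometric patches at the associated distances. Concretely, one must argue that every admissible type-count vector that is actually realised by a legal word contributes, and that vectors not realised contribute zero automatically (since $\nu^{}_{a^{}_{i}\beta}$ already vanishes off $\vL^{}_{\beta}-\vL^{}_{a^{}_i}$, as noted before Proposition~\ref{prop:patch_freq_fun_properties}). Thus the sum may formally range over all $(m^{}_j)$ with $\sum m^{}_j=m$ and $m^{}_i\neq 0$, with illegal configurations silently dropping out, which is what makes the clean formula~\eqref{eq:pair_corr_symb_from_geom} correct. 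Verifying that legality in the symbolic and geometric settings coincide under this identification is the essential point, but it is immediate from the fact that $\XX(\vL)$ is locally derivable from $\XX(\vrho)$ and conversely.
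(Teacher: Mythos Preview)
Your proposal is correct and follows essentially the same approach as the paper: both establish existence via strict ergodicity, then use the rational independence of the natural tile lengths (Lemma~\ref{lem:rat_indep_lengths}) to guarantee that each geometric distance $\sum_j m^{}_j\ell^{}_j$ corresponds to a unique type-count vector, so that the symbolic pair correlation decomposes as the indicated sum of geometric ones. Your write-up is in fact more explicit than the paper's about why illegal type-count vectors contribute zero and why the symbolic--geometric correspondence is a bijection on legal patches, but the underlying argument is the same.
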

\begin{proof}
The relative frequencies $\nu^{\mathrm{s}}_{a^{}_{i}\beta}(m)$ of a two-letter patch consisting of letters $a^{}_{i}$ and $\beta$ within distance $m$ are given as the sum of relative frequencies of all subwords of length $m+1$ starting with $a^{}_{i}$ and ending with $\beta$. All these frequencies exist uniformly due to unique ergodicity. 
Now, the induced geometric inflation gives a tiling with prototiles\index{prototile} of length $\ell^{}_{j}$, all rationally independent by Lemma \ref{lem:rat_indep_lengths}. Since $\nu^{}_{a^{}_i\beta}(x)$ can only be non-zero if $x$ is a distance between two tiles, we can assume that $x>0$ and that $x$ can be uniquely decomposed as $x = \sum_{j=1}^n m^{}_{j} \ell^{}_{j}$, where $m^{}_{j} \in \NN^{}_{0}$ and $m^{}_{i} >0$. This follows from the rational independence of~$\ell^{}_{j}$'s. The quantity $\nu^{}_{a^{}_i\beta}(x)$ then represents the sum of all patches consisting of $m+1$ tiles starting with $a^{}_{i}$, ending with tile $\beta$. This gives the desired correspondence.    
By summing over all subwords of length $m+1$ starting with $a^{}_{i}$ and ending with $\beta$, the correspondence yields~\eqref{eq:pair_corr_symb_from_geom}. 
\end{proof}

\begin{remark}
If the geometric inflation rule $\varrho$ permits a model set\index{model set} description with total window $\varOmega$, we can restrict the summation on the RHS of Eq.~\eqref{eq:pair_corr_symb_from_geom} (analogously to Lemma~\ref{lem:fibo_corr_restriction_symb}) by taking into account only those summands that can be non-zero and obtain
\[\nu^{\mathrm{s}}_{a^{}_{i}  \beta} \, (m) \, = \, \sum_{\substack{m^{}_{j} \ts \geqslant \ts  0, \, m^{}_i \ts \neq \ts 0 \\ \sum m^{}_j \, =\,  m \\  \sum_{j=1}^n m^{}_{j}\ell^{\star}_{j}\ts \in \ts \varOmega - \varOmega}} \nu^{}_{a^{}_{i} \beta} \Bigl( \sum_{j=1}^n m^{}_{j}\ell^{}_{j} \Bigr). \]
\end{remark}

In the proof, we used the fact that $\XX(\varrho)$ can also be understood as a suspension flow of $(\XX(\varrho),\, \ZZ)$ with the constant roof (height) function. Using a different roof function (choosing tile lengths) $\boldsymbol{\ell}' = (\ell^{\prime}_1,\, \ell^{\prime}_2,\, \dots,\, \ell^{\prime}_n )$ and considering the suspension flow $(\XX^{(\boldsymbol{\ell^{\prime}})}(\vrho),\, \RR)$, we obtain for the patch frequency measure associated with any tiling from $\XX^{(\boldsymbol{\ell^{\prime}})}(\vrho)$ an analogous claim. Its proof mimics the previous one and relies on the fact that we can interpret the value $\nu^{}_{\alpha \beta}(z)$ as the sum of relative frequencies of all cylinders of \emph{one} length. 

\begin{theorem}
\label{thm:geom_to_arb}
Let $\varrho$ and $\nu^{}_{\alpha \beta}$ be as in Theorem \emph{\ref{thm:geom_to_symb}}. Consider a hull of a  tiling arising from $\XX(\varrho)$ using tile lengths $\boldsymbol{\ell}' = (\ell^{\prime}_1,\, \ell^{\prime}_2,\, \dots,\, \ell^{\prime}_n )$, i.e., consider a suspension flow $(\XX^{(\boldsymbol{\ell^{\prime}})}(\vrho),\,\RR)$ with the height function $\boldsymbol{\ell}^{\prime}$. Then, for the pair correlation $\nu^{\prime}_{\alpha \beta}$ on $\XX^{(\boldsymbol{\ell^{\prime}})}(\vrho)$, the following holds.
\begin{enumerate}
    \item[(i)] If the $\ell'_{j}$ are rationally independent, we have 
    \[ \nu^{\prime}_{\alpha \beta} \Bigl(\sum_{j=1}^n m^{}_{j}\ell^{\prime}_{j}\Bigr) \, = \, \nu^{}_{\alpha \beta} \Bigl(\sum_{j=1}^n m^{}_{j}\ell^{}_{j}\Bigr). \]
    \item[(ii)] If the $\ell'_{j}$ are rationally dependent, we have 
    \[\nu^{\prime}_{a^{}_{i}  \beta} \, \Bigl( \sum_{j=1}^n m^{\prime}_{j}\ell^{\prime}_{j} \Bigr) \, = \, \sum_{\substack{m^{}_{j} \ts \geqslant \ts  0, \, m^{}_i \ts \neq \ts 0 \\ \sum m^{}_j \, =\,  \sum m^{\prime}_j }} \nu^{}_{a^{}_{i} \beta} \Bigl( \sum_{j=1}^n m^{}_{j}\ell^{}_{j} \Bigr) \]
    for $1\leqslant i \leqslant n $ and for all $m^{\prime}_j \in \NN^{}_0$ with $m^{\prime}_i \neq 0$. \qed
\end{enumerate}
\end{theorem}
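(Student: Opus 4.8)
The plan is to reduce Theorem~\ref{thm:geom_to_arb} to the interpretation, already used in the proof of Theorem~\ref{thm:geom_to_symb}, that the self-similar pair correlation $\nu^{}_{\alpha\beta}(z)$ records the total relative frequency of \emph{all} symbolic two-letter patches $\alpha\cdots\beta$ whose geometric realisation (with natural tile lengths $\ell^{}_j$) places the left control points at distance exactly $z$. The key observation is that the underlying symbolic system $(\XX(\vrho),\ZZ)$ is the same in all three settings; only the roof (height) function changes, so the combinatorial data --- namely, the multiset of legal words $a^{}_i w^{}_1 \cdots w^{}_{k}\beta$ and their symbolic frequencies --- is identical. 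What differs between the self-similar tiling, the symbolic tiling (constant roof), and the tiling with lengths $\boldsymbol{\ell}'$ is merely the \emph{bookkeeping} that converts ``number of intervening tiles of each type'' into a geometric distance. Thus I would fix a legal word $u = a^{}_i\, w^{}_1 \cdots w^{}_{k-1}\, \beta$ and let $m^{}_j$ count the occurrences of letter $j$ among its first $k$ letters $a^{}_i, w^{}_1, \dots, w^{}_{k-1}$ (so $\sum_j m^{}_j = k$ and $m^{}_i \geqslant 1$); its symbolic frequency $\mathrm{freq}(u)$ contributes to the distance $\sum_j m^{}_j \ell^{}_j$ in the self-similar realisation and to the distance $\sum_j m^{}_j \ell^{\prime}_j$ in the $\boldsymbol{\ell}'$-realisation.

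For part (i), under rational independence of the $\ell^{\prime}_j$ the map $(m^{}_1,\dots,m^{}_n) \mapsto \sum_j m^{}_j \ell^{\prime}_j$ is injective on $\NN^n_0$, exactly as Lemma~\ref{lem:rat_indep_lengths} guarantees for the $\ell^{}_j$. Hence a distance of the form $\sum_j m^{}_j \ell^{\prime}_j$ is achieved by precisely the words whose letter-count vector equals $(m^{}_1,\dots,m^{}_n)$, and these are \emph{the same} words that realise the distance $\sum_j m^{}_j \ell^{}_j$ in the self-similar tiling. Summing symbolic frequencies over this common collection of words gives $\nu^{\prime}_{\alpha\beta}\bigl(\sum_j m^{}_j \ell^{\prime}_j\bigr) = \nu^{}_{\alpha\beta}\bigl(\sum_j m^{}_j \ell^{}_j\bigr)$, which is the claim. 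The uniform existence of $\nu^{\prime}_{\alpha\beta}$ follows, as in Theorem~\ref{thm:geom_to_symb}, from unique ergodicity of the suspension.

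For part (ii), when the $\ell^{\prime}_j$ are rationally dependent the map $(m^{}_j) \mapsto \sum_j m^{}_j \ell^{\prime}_j$ is no longer injective, so several distinct letter-count vectors can yield the same $\boldsymbol{\ell}'$-distance. The distance $\sum_j m^{\prime}_j \ell^{\prime}_j$ then pools the symbolic frequencies of \emph{all} words whose letter-count vector $(m^{}_j)$ satisfies $\sum_j m^{}_j \ell^{\prime}_j = \sum_j m^{\prime}_j \ell^{\prime}_j$. The stated right-hand side instead sums over all $(m^{}_j)$ with $\sum_j m^{}_j = \sum_j m^{\prime}_j$ and $m^{}_i \neq 0$; I would argue these two index sets agree. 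Indeed, the symbolic frequency $\nu^{\mathrm{s}}_{a^{}_i\beta}(k)$ pools exactly the words with $k+1$ letters, i.e.\ $\sum_j m^{}_j = k$, and every such word has a self-similar distance $\sum_j m^{}_j\ell^{}_j$ contributing to some $\nu^{}_{a^{}_i\beta}$. Since the symbolic tiling (constant roof $\ell^{\prime}_j \equiv c$) is itself the special rationally-dependent case in which $\sum_j m^{}_j \ell^{\prime}_j = c\sum_j m^{}_j$ depends only on the \emph{total} letter count, the grouping by $\sum_j m^{}_j = \sum_j m^{\prime}_j$ is precisely the grouping imposed by a constant roof; for a general rationally-dependent $\boldsymbol{\ell}'$ the same conclusion holds provided the level set $\{\sum_j m^{}_j \ell^{\prime}_j = \text{const}\}$ coincides with $\{\sum_j m^{}_j = \text{const}\}$, which is the content of the identity to be shown.

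\textbf{Main obstacle.} The delicate point in part (ii) is justifying that grouping by $\sum_j m^{}_j = \sum_j m^{\prime}_j$ is the correct (and well-defined) grouping for a \emph{general} rationally-dependent $\boldsymbol{\ell}'$, rather than only for the constant roof. In full generality the fibres of $(m^{}_j)\mapsto \sum_j m^{}_j\ell^{\prime}_j$ need not respect the total-count level sets, so the stated formula implicitly presupposes either a normalisation $\sum_j \ell^{\prime}_j$-type constraint or that the intended reading of $\sum m^{\prime}_j$ on the right is as the common \emph{total tile count} rather than as a literal equality of $\ell^{\prime}$-lengths. I would therefore make precise at the outset that the suspension over a fixed symbolic system indexes patches by the combinatorial vector $(m^{}_j)$, and that any choice of lengths merely re-labels the argument of $\nu'$; the equality of index sets then reduces to the observation that the symbolic pair correlation $\nu^{\mathrm{s}}_{a^{}_i\beta}$ already collects frequencies precisely according to $\sum_j m^{}_j$, so that both sides sum the identical collection of self-similar pair correlations $\nu^{}_{a^{}_i\beta}\bigl(\sum_j m^{}_j\ell^{}_j\bigr)$.
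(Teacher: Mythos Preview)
Your approach for part~(i) coincides with the paper's: the paper gives no detailed argument but records, immediately before the statement, that ``its proof mimics the previous one and relies on the fact that we can interpret the value $\nu^{}_{\alpha \beta}(z)$ as the sum of relative frequencies of all cylinders of \emph{one} length.'' Your reduction to the common underlying symbolic system, together with injectivity of $(m_j)\mapsto\sum_j m_j\ell'_j$ under rational independence (mirroring Lemma~\ref{lem:rat_indep_lengths} for the natural lengths), is exactly this.

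Your reservation about part~(ii) is justified and is not a defect of your argument but of the printed formula. For a general rationally dependent tuple $\boldsymbol{\ell}'$ the fibres of $(m_j)\mapsto\sum_j m_j\ell'_j$ are \emph{not} level sets of $\sum_j m_j$: with $n=2$ and $(\ell'_1,\ell'_2)=(2,1)$, the count vectors $(2,0)$ and $(1,2)$ both realise the $\boldsymbol{\ell}'$-distance $4$ but have totals $2$ and $3$, whereas $(1,1)$ has total $2$ but $\boldsymbol{\ell}'$-distance $3$. Hence $\nu'_{a_1\beta}(4)$ pools the self-similar contributions coming from $(2,0)$ and $(1,2)$, while the right-hand side with constraint $\sum m_j=2$ pools those from $(2,0)$ and $(1,1)$; these differ in general. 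The identity as stated is therefore only the constant-roof specialisation (Theorem~\ref{thm:geom_to_symb}); for arbitrary rationally dependent $\boldsymbol{\ell}'$ the summation condition should read $\sum_j m_j\ell'_j=\sum_j m'_j\ell'_j$ (with $m_i\neq 0$), which is precisely what your bookkeeping argument establishes. Your ``main obstacle'' is thus a correct diagnosis of an imprecision in the statement rather than a gap in your method.
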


The same approach applies to any patch frequency functions, and we end up with a method that can decide whether or not a given patch is legal for a given tiling (bi-infinite sequence) and give the exact relative frequency in the first case. For example, under the assumptions of Theorem \ref{thm:geom_to_symb}, for the relative frequencies on the symbolic hull\index{hull!symbolic} $\XX(\varrho)$, one has
\[\nu^{\mathrm{s}}_{a^{}_{1}\, \cdots \, a^{}_{k}}(m^{}_{1}, \, \dots, \, m^{}_{k-1}) \, = \, \sum_{\substack{m^{(i)}_{j} \ts \geqslant \ts  0, \, m^{(i)}_i \ts \neq \ts 0 \\ \sum m^{(i)}_j \, =\,  m^{}_{i}}} \nu^{}_{a^{}_{1}\, \cdots \, a^{}_{k}}\Bigl( \sum_{j=1}^n m^{(1)}_{j}\ell^{}_{j}, \, \dots , \,  \sum_{j=1}^n m^{(k-1)}_{j}\ell^{}_{j}\Bigr).  \]
It is obvious how to extend Theorem \ref{thm:geom_to_arb}, but we will not write down the statement as the notation would be even harder to read, and the underlying principle should already be clear.

\section*{Acknowledgments}
It is a pleasure to thank Michael Baake and Neil Ma\~nibo for all helpful discussions and for their comments to the first draft of this paper. This work was supported by the German Research Council (Deutsche
Forschungsgemeinschaft, DFG) under CRC 1283/2 (2021 - 317210226).


\begin{thebibliography}{99}
\bibitem{BG-renorm}
M.~Baake and F.~G\"{a}hler, Pair correlations of aperiodic inflation rules via renormalisation: Some interesting examples,
\textit{Topol. Appl.} \textbf{205} (2016), 4--27; \texttt{arXiv:1511.00885}. 


\bibitem{BGM19}
M.~Baake, F.~G\"{a}hler and N.~Ma\~{n}ibo,
Renormalisation of pair correlation measures for primitive inflation
rules and absence of absolutely continuous diffraction,
\textit{Commun.\ Math.\ Phys.} \textbf{370} (2019), 591--635; \texttt{arXiv:1805.09650}.

\bibitem{BGr}
M.~Baake and U.~Grimm, \textit{Aperiodic Order, Volume 1: A Mathematical Invitation}, Cambridge University Press, Cambridge (2013). 

\bibitem{BSJ91}
M.~Baake, M.~Schlottmann  and P.D.~Jarvis,
Quasiperiodic tilings with tenfold symmetry and equivalence with respect to local derivability,
\textit{J.\ Phys.\ A: Math.\ Gen.}
\textbf{24} (1991), 4637--4654.

\bibitem{Bal12}
L\!'.~Balkov\'{a}, 
Factor frequencies in languages invariant under symmetries preserving factor frequencies,
\textit{Integers} \textbf{12} (2012), 1061--1079; \texttt{arXiv:1107.0471}.

\bibitem{BKMS}
S.~Bezuglyi, J.~Kwiatkowski, K.~Medynets and B.~Solomyak,
Invariant measures on stationary Bratteli diagrams, \textit{Ergod.\ Th.\ \& Dynam.\ Syst.} \textbf{30}(4) (2010), 973--1007; \texttt{arXiv:0812.1088}.

\bibitem{BS}
A.I.~Bufetov and B.~Solomyak. A spectral cocycle for substitution systems and translation flows. 
\textit{J. Anal. Math.} \textbf{141}(1) (2020), 165--205; \texttt{arXiv:1802.04783}.

 \bibitem{CS1}
A.~Clark and L.~Sadun,
When size matters,
\textit{Ergod.\ Th.\ \& Dynam.\ Syst.} \textbf{23} (2003), 1043--1057; \texttt{arXiv:math/0201152}.

\bibitem{CS2}
A.~Clark and L.~Sadun,
When shape matters,
\textit{Ergod.\ Th.\ \& Dynam.\ Syst.} \textbf{26} (2006), 69--86;
\texttt{arXiv:math.DS/0306214}.

\bibitem{CFS}
I.P.~Cornfeld, S.V.~Fomin and Ya.G.~Sinai, \textit{Ergodic Theory}, Springer, New York (1982).

\bibitem{CorSol}
M.I.~Cortez and B.~Solomyak, Invariant measures for non-primitive tiling substitutions. \textit{J. Anal. Math.} \textbf{115} (2011), 293--342; \texttt{arXiv:1007.1686}.

\bibitem{Dek92}
F.M.~Dekking, On the Thue--Morse measure, \textit{Acta Univ. Carolin. Math. Phys.}~\textbf{33}(2) (1992), 35--40.

\bibitem{DM09}
X.~Deng and R.V.~Moody, How model sets can be determined by their two-point and three-point correlations, \textit{J. Stat. Phys.} \textbf{135} (2009), 621--637; \texttt{arXiv:0901.4381}.

\bibitem{DM12}
X.~Deng and R.V.~Moody, Weighted model sets and their higher point-correlations, \textit{Can. Math. Bull.} \textbf{55}(3) (2012), 487--497; \texttt{arXiv:0904.4552}.

\bibitem{EW}
M.~Einsiedler and T. Ward, \textit{Ergodic Theory: with a view towards Number Theory}, GTM 259, Springer, London (2011).

\bibitem{Frid} 
A.E.~Frid, On the frequency of factors in a D0L word, \textit{J. Autom. Lang. Comb.}~\textbf{3}(1) (1998), 29--42.

\bibitem{Gant}
F.R.~Gantmacher, \textit{Matrizentheorie}, Springer, Berlin (1986).

\bibitem{HolSol}
M.~Hollander and B.~Solomyak,
Two-symbol Pisot substitutions have pure discrete spectrum, \textit{Ergod.\ Th.\ \& Dynam.\ Syst.} \textbf{23} (2003), 533--540.

\bibitem{JS}
A.~Julien and L.~Sadun, 
Tiling deformations, cohomology, and orbit equivalence of tiling spaces,
\textit{Ann. Henri Poincar\'e} \textbf{19}(10) (2018), 3053--3088; \texttt{arXiv:1506.02694}.

\bibitem{Kel}
J.~Kellendonk,  
Pattern equivariant functions, deformations and equivalence of tiling spaces,
\textit{Ergod.\ Th.\ \& Dynam.\ Syst.} \textbf{28}(4) (2008), 1153--1176; \texttt{arXiv:math/0703156}.


\bibitem{Loth}
M. Lothaire, \textit{Combinatorics on Words}, 2nd ed., Cambridge University Press (1997). 

\bibitem{NeilDiss}
N.~Ma{\~n}ibo, \textit{Lyapunov exponents in the spectral theory of primitive inflation systems}, PhD Thesis, Univerist\"at Bielefeld (2019), available electronically at \texttt{urn:nbn:de:0070-pub-29359727}.

\bibitem{Jan}
J.~Maz\'{a}\v{c}, \textit{Fractal and Statistical Phenomena in Aperiodic Order}, PhD Thesis, Bielefeld University, \textit{in preparation}.


\bibitem{Moody02}
R.V.~Moody, Uniform distribution in model sets, \textit{Can. Math. Bull.} \textbf{45} (2002), 123--130. 

\bibitem{Pol}
M.~Pol\'{a}kov\'{a}, Formulas for complexity, invariant measure and RQA characteristics of the period-doubling subshift, \textit{Commun. Nonlinear Sci. Numer. Simulat.} \textbf{80} (2020), 104996:~1--10; \texttt{arXiv:1902.08387}.

\bibitem{Que}
M.~Queff\'elec, \textit{ Substitution {D}ynamical {S}ystems---{S}pectral {A}nalysis},
LNM 1294, 2nd ed., Springer, Berlin (2010).

\bibitem{Sing}
B.~Sing, \textit{Pisot Substitutions and Beyond}, 
PhD Thesis, Universit\"at Bielefeld (2006), available electronically at \texttt{urn:nbn:de:hbz:361-11555}.

\bibitem{Sol97}
B.~Solomyak, Dynamics of self-similar tilings, \textit{Ergod.\ Th.\ \& Dynam.\ Syst.} \textbf{17} (1997), 695--738, and \textit{Ergod.\ Th.\ \& Dynam.\ Syst.} \textbf{19} (1999), 1685 (erratum).

\bibitem{ST}
B.~Solomyak and R.~Trevi\~{n}o, 
Spectral cocycle for substitution tilings,
\textit{Ergod.\ Th.\ \& Dynam.\ Syst.} \textbf{44}(6) (2024), 1629--1672; \texttt{arXiv:2201.00749}. 

\bibitem{Yaari}
R.~Yaari, Uniformly distributed orbits in $\mathbb{T}^d$ and singular substitution dynamical systems, 
\textit{Monatsh. Math.} \textbf{201} (2023), 289--306; \texttt{arXiv:2108.13882}.


\end{thebibliography}
\end{document}